\documentclass{article}
\usepackage{mathrsfs, amsmath, amsthm, amssymb, bm, mathtools, thm-restate}
\usepackage{graphicx, xcolor, tikz}
\usetikzlibrary{arrows.meta, intersections, calc, shapes}
\usepackage{caption, subcaption}
\captionsetup{subrefformat=parens}
\usepackage{array}
\arraycolsep=1pt
\usepackage{cases}
\makeatletter
\def\th@plain{%
  \upshape 
}
\makeatother

\makeatletter
\renewenvironment{proof}[1][\proofname]{\par
  \pushQED{\qed}%
  \normalfont \topsep6\p@\@plus6\p@\relax
  \trivlist
  \item[\hskip\labelsep
        \bfseries
    #1\@addpunct{.}]\ignorespaces
}{%
  \popQED\endtrivlist\@endpefalse
}
\makeatother

\newtheorem{theorem}{Theorem}[section]
\newtheorem{lemma}{Lemma}[section]

\newtheorem{conjecture}{Conjecture}
\newtheorem*{conjecture*}{Conjecture}
\newtheorem{case}{Case}

\theoremstyle{definition}

\usepackage[left=25mm, top=25mm, bottom=25mm, right=25mm]{geometry}
\setlength{\parskip}{0pt}

\usepackage[T1]{fontenc} 
\usepackage{paralist, tablists}
\usepackage[inline]{enumitem}
\usepackage[square, numbers, sort&compress]{natbib}

\usepackage[pdftex,%
  bookmarks=true,%
  bookmarksnumbered=true, 
  bookmarksopen=true, 
  plainpages=false,%
  pdfpagelabels,%
  colorlinks=true, 
  linkcolor=blue, 
  citecolor=blue,%
  anchorcolor=green,
  urlcolor=blue,
  breaklinks=true,
  hyperindex=true]{hyperref}

\newcounter{Hcase}
\newcounter{Hclaim}

\newcommand{\resetcounter}{\stepcounter{Hcase}\setcounter{case}{0}\stepcounter{Hclaim}\setcounter{claim}{0}}

\newcommand{\etal}{et~al.\ }


\def\int(#1){\mathrm{int}(#1)}
\def\ext(#1){\mathrm{ext}(#1)}
\def\Int(#1){\mathrm{Int}(#1)}
\def\Ext(#1){\mathrm{Ext}(#1)}
\def\ad(#1){\mathrm{ad}(#1)}
\def\mad(#1){\mathrm{mad}(#1)}
\def\la(#1){\mathrm{la}(#1)}


\begin{document}
\title{Star edge-coloring of some special graphs\footnote{All the results are included in the first author's Master Thesis in spring of 2019}}
\author{Xuling Hou { }\quad Lingxi Li {}\quad Tao Wang\footnote{{\tt Corresponding
author: wangtao@henu.edu.cn}}\\
{\small Institute of Applied Mathematics}\\
{\small Henan University, Kaifeng, 475004, P. R. China}}
\date{}
\maketitle
\begin{abstract}

The star chromatic index of a multigraph $G$,  denoted by $\chi_{\mathrm{star}}'(G)$, is the minimum number of colors needed to properly color the edges of $G$ such that no path or cycle of length $4$ is bicolored.

In this paper, we study the star edge-coloring of Halin graphs, $k$-power graphs and the generalized Petersen graphs $P(3n, n)$.
\end{abstract}

Keywords: star edge-coloring, star chromatic index, Halin graph,  $k$-power graph, the generalized Petersen graph.
\section{Introduction}
All graphs considered in this paper are simple, finite and undirected. We use $V(G)$, $E(G)$, $\Delta(G)$ and $\delta(G)$ to denote the vertex set, edge set, maximum degree and minimum degree of $G$, respectively. A {\bf Halin graph} is a plane graph that consists of a plane embedding of a tree $T$ and a cycle $C$, in which the cycle $C$ connects the leaves of the tree such that it is the boundary of the exterior face and the degree of each interior vertex of $T$ is at least three. The tree $T$ is called the {\bf characteristic tree} and the cycle $C$ is called the {\bf adjoint cycle} of $G$. We write $G = T \cup C$ for a Halin graph with characteristic tree $T$ and adjoint cycle $C$. A {\bf complete Halin graph} is a graph that all leaves of the  characteristic tree are at the same distance from the root vertex.

A {\bf caterpillar} is a tree whose removal of leaves results in a path called the {\bf spine} of the caterpillar. Suppose $G$ is a Halin graph of order $2n+2$ and has a caterpillar $T$ as its characteristic tree where $n \geq 1$. We name the vertices along the spine $P_{n}$ by $1, 2, \dots, n$. The vertices adjacent to $1$ are named by $0$ and $1'$. The vertices adjacent to $h$ are named by $h+1$ and $h'$. Other leaves adjacent to $i$ are named by $i'$, $2\leq i \leq h-1$. Note that $0, 1', \dots, h', h+1$ are vertices lying on the adjoint cycle $C_{h+2}$. If $\{0, 1'\}, \{1', 2'\}, \dots, \{(h-1)', h'\}, \{h', h+1\}$ and $\{h+1, 0\}$ are edges of the adjoint cycle of $G$ (i.e., vertices $0, 1', \dots, h', h+1$ in $C_{h+2}$ are in order), then we can call such a graph $G$ a \textbf{necklace}, which is denoted by $\mathcal{N}_{h}$.

The $k$-power of a graph $G$ is the graph $G^{k}$ whose vertex set is $V(G)$, two distinct vertices being adjacent in $G^{k}$ if and only if their distance in $G$ is at most $k$. Let $P_n=v_1v_2\dots v_n$ be a path of order $n$, we add an edge when the distance between two vertices $v_i, v_j\ (1 \leq i < j \leq n)$ is two, then we can call this graph $P_n^2$. Similarly, $C_n^2$ (i.e. cyclic square graphs) can be defined.

The {\bf generalized Petersen graph} $P(m, n)$ is defined to be the graph of order $2m$ whose vertex set is $\{u_1, \dots , u_m, v_1, \dots , v_m\}$ and edge set is $\{u_iu_{i+1},v_iv_{i+n},u_iv_i\colon i=1,\dots ,m\}$, where the addition is taken modulo $m$.

A proper $k$-edge-coloring of $G$ is a mapping $\phi$: $E(G)\rightarrow\{1, 2, \cdots, k\}$ such that $\phi(e)\neq \phi(e')$ for any two adjacent edges $e$ and $e'$. The chromatic index $\chi'(G)$ of $G$ is the smallest integer $k$ such that $G$ has a proper $k$-edge-coloring.

A proper $k$-edge-coloring $\phi$ of $G$ is called a strong $k$-edge-coloring if any two edges of distance at most two get distinct colors. That is, each color class is an induced matching in the graph $G$. The strong chromatic index, denoted by $\chi_s'(G)$.

A proper $k$-edge-coloring $\phi$ of $G$ is called a star $k$-edge-coloring if there do not exist bichromatic paths or cycles of length four. That is, at least three colors are needed for a path or a cycle of length four. The star chromatic index, denoted by $\chi_{\mathrm{star}}'(G)$.

Liu and Deng \cite{MR2416274} showed that $\chi_{\mathrm{star}}'(G) \leq \lceil 16(\Delta-1)^{\frac{3}{2}}\rceil$ when $\Delta\geq7$. Dvo\v{r}\'{a}k, Mohar, and \v{S}\'{a}mal \cite{MR3019390} presented a near-linear upper bound for  $\chi_{\mathrm{star}}'(G) \leq \Delta \cdot 2^{O(1)\sqrt{\log\Delta}}$. Wang \cite{MR3796353} showed that if a graph $G$ can be edge-partitioned into two graphs $F$ and $H$, then $\chi_{\mathrm{star}}'(G)\leq \chi_{\mathrm{star}}'(F)+\chi_s'(H|_G)$, where $\chi_s'(H|_G)$ denotes the strong chromatic index of $H$ restricted on $G$. In this paper, we use this result to prove the star edge-coloring of $G^k$.

Bezegov\'{a} \cite{MR3431294} \etal proved the following bound for trees.

\begin{theorem}[\cite{MR3431294}] \label{tree}
Let $T$ be a tree with maximum degree $\Delta$. Then $\chi_{\mathrm{star}}'(T) \leq \lfloor 1.5\Delta \rfloor$, and the bound is tight. 
\end{theorem}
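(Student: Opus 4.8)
The plan is to establish the upper bound $\chi_{\mathrm{star}}'(T)\leq\lfloor 1.5\Delta\rfloor$ by induction on $|V(T)|$, peeling off a pendant star, and to prove tightness by exhibiting an explicit extremal tree. For the induction, I would pick a longest path $v_0v_1\cdots v_\ell$ of $T$; then $v_1$ has exactly one non-leaf neighbour, namely $v_2$, while its remaining neighbours $\ell_1,\dots,\ell_t$ (with $t=\deg(v_1)-1$) are all leaves (otherwise the path could be lengthened). I would delete the pendant edges $v_1\ell_1,\dots,v_1\ell_t$ to obtain a smaller tree $T'$ with $\Delta(T')\leq\Delta$, colour $T'$ by the induction hypothesis with $\lfloor 1.5\Delta\rfloor$ colours, and then argue that the coloring extends to the $t$ deleted edges.

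The extension step rests on identifying exactly which bichromatic paths of length four a newly coloured edge $v_1\ell_i$ can lie on. Since each $\ell_i$ is a leaf, every such $P_5$ must start at $\ell_i$ and run $\ell_i\,v_1\,v_2\,x\,y$ with $x\in N(v_2)\setminus\{v_1\}$ and $y\in N(x)\setminus\{v_2\}$; this path is bichromatic precisely when $\phi(v_1\ell_i)=\phi(v_2x)$ and $\phi(v_1v_2)=\phi(xy)$. Writing $b=\phi(v_1v_2)$, the edge $v_1\ell_i$ must therefore avoid $b$ and the previously placed pendant colours (properness at $v_1$), together with the set $F=\{\phi(v_2x): x\in N(v_2)\setminus\{v_1\},\ b\in\{\phi(xy):y\in N(x)\setminus\{v_2\}\}\}$ coming from these length-four paths. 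Since $|F|\leq\deg(v_2)-1$, the pendant edges can be coloured as soon as $\lfloor 1.5\Delta\rfloor\geq t+1+|F|$.

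The main obstacle is that this naive count gives $t+1+|F|\leq(\deg(v_1)-1)+1+(\deg(v_2)-1)$, which can be as large as $2\Delta-1$ and so overshoots the budget $\lfloor 1.5\Delta\rfloor$ by roughly $\Delta/2$; a straightforward averaging over the choice of $b$ does not recover this loss. The heart of the argument is thus to spend the saved $\Delta/2$ colours wisely: I would choose the colour $b$ on the attachment edge $v_1v_2$ (deleting $v_1$ entirely from $T'$ so that $b$ is free to reselect), and if necessary recolour a few edges incident to $v_2$, so as to force $|F|\leq\lfloor 1.5\Delta\rfloor-(\deg(v_1)-1)$. Equivalently, one strengthens the inductive statement to control the palette around the attachment edge. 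This balancing between the number $\deg(v_1)-1$ of pendant edges to be placed and the colours contributed by the neighbourhood of $v_2$ is precisely the tension the theorem is up against, and carrying it through is the delicate part of the proof.

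For tightness I would analyse the two-level tree $T$ with a centre $v$ of degree $\Delta$, neighbours $u_1,\dots,u_\Delta$, and $\Delta-1$ pendant leaves at each $u_i$. Properness forces the edges $vu_i$ to receive distinct colours $\alpha_1,\dots,\alpha_\Delta$ and forces the leaf-edges at $u_i$ to use a palette $L_i\subseteq C\setminus\{\alpha_i\}$ with $|L_i|=\Delta-1$. Every length-four path has the form $\mathrm{leaf}\,u_i\,v\,u_j\,\mathrm{leaf}$, so the star condition says: for no pair $i\neq j$ do we have both $\alpha_j\in L_i$ and $\alpha_i\in L_j$. Modelling ``$\alpha_j\in L_i$'' as an arc $i\to j$, this forbids $2$-cycles, so the digraph has at most $\tfrac{\Delta(\Delta-1)}{2}$ arcs; on the other hand, if $|C|=m$ then each $L_i$ must contain at least $2\Delta-1-m$ of the $\alpha_j$, giving at least $\Delta(2\Delta-1-m)$ arcs. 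Comparing the two bounds yields $m\geq\tfrac{3\Delta-1}{2}$, hence $m\geq\lfloor 1.5\Delta\rfloor$; together with the upper bound this forces equality on $T$ (and already $P_5$ needs $3=\lfloor 1.5\cdot 2\rfloor$ colours for $\Delta=2$), establishing tightness.
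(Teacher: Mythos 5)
The paper states this theorem only as a quoted result of Bezegov\'a et al.\ \cite{MR3431294} and gives no proof of it, so there is no in-paper argument to compare against; I am judging your proposal on its own terms. The tightness half is complete and correct: for your two-level tree every $4$-path has the form $\mathrm{leaf}\,u_i\,v\,u_j\,\mathrm{leaf}$, the digraph recording ``$\alpha_j\in L_i$'' has no loops (properness at $u_i$) and no $2$-cycles (the star condition), hence at most $\Delta(\Delta-1)/2$ arcs, while the pigeonhole estimate $|L_i\cap\{\alpha_j: j\neq i\}|\geq 2\Delta-1-m$ gives at least $\Delta(2\Delta-1-m)$ arcs; comparing yields $m\geq(3\Delta-1)/2$, which equals $\lfloor 1.5\Delta\rfloor$ for odd $\Delta$ and rounds up to it for even $\Delta$. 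That is a clean, self-contained proof that this tree needs $\lfloor 1.5\Delta\rfloor$ colors.

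The upper bound, however, is not proved. You set up the induction (peel the pendant star at the penultimate vertex $v_1$ of a longest path), correctly compute that the naive forbidden list for a pendant edge can have size up to $(\deg v_1-1)+1+(\deg v_2-1)\leq 2\Delta-1$, note that this overshoots the budget $\lfloor 1.5\Delta\rfloor$ by roughly $\Delta/2$, and then defer the resolution: you propose to ``reselect $b$'', ``recolour a few edges incident to $v_2$'', or ``strengthen the inductive statement'', and explicitly call carrying this through the delicate part. That delicate part is the entire content of the theorem: the constant $3/2$ comes precisely from a strengthened induction that controls, for each vertex, how many of the colors on its child edges may be reused on the child edges of a given child (roughly, reuse at most half and take the other half fresh), and without stating and verifying such a hypothesis the induction does not close. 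The local repair you gesture at is also not obviously sound as stated: recolouring $v_1v_2$ or edges at $v_2$ after $T'$ has been coloured can create new bicoloured $4$-paths through the other neighbours of $v_2$ and its parent, so $b$ is not free to reselect. As written, the upper-bound half identifies the obstacle but does not overcome it; it needs the strengthened inductive statement made explicit and checked.
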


Han \etal \cite{MR3924408} showed that the list star chromatic index of tree is also at most $\lfloor 1.5\Delta \rfloor$. 
\begin{theorem}
Let $T$ be a tree with maximum degree $\Delta$. Then $\mathrm{ch}_{\mathrm{star}}'(T) \leq \lfloor 1.5\Delta \rfloor$, and the bound is tight. 
\end{theorem}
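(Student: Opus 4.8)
The plan is to mirror the greedy, vertex-by-vertex argument behind Theorem~\ref{tree}, upgrading each explicit colour choice to a choice from a list. First I would root $T$ at an arbitrary vertex $r$, orient every edge away from $r$, and process the vertices in BFS order. When a vertex $v$ with parent $u$ is reached, all edges on the path from $r$ to $v$ are already coloured; in particular the parent edge $uv$ has some colour $\alpha$, and the task reduces to colouring the down-edges $vw_1,\dots,vw_t$ (with $t=\deg(v)-1\le\Delta-1$) from their lists $L(vw_1),\dots,L(vw_t)$, each of size $\lfloor 1.5\Delta\rfloor$.

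For a single down-edge $vw_i$ the forbidden colours fall into three groups. Properness at $v$ rules out $\alpha$ together with the colours already assigned to $vw_1,\dots,vw_{i-1}$. The backward star constraints come only from the unique upward path $w_i\,v\,u\,p(u)\,p(p(u))$: this path is bichromatic precisely when $\phi(vw_i)=\phi(u\,p(u))$ while $\phi(vu)=\phi(p(u)\,p(p(u)))$, so it rules out at most one further colour. The delicate group is the forward star constraints, arising from bichromatic paths of the shapes $x\,w_i\,v\,u\,u'$ and $x\,w_i\,v\,w_j\,y$, whose far edges lie below $w_i$ (and below $w_j$) and are coloured only at later stages. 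The crucial point is that each such constraint is \emph{conditional} — the path $x\,w_i\,v\,u\,u'$ only threatens to close up if $\phi(vw_i)$ coincides with some $\phi(uu')$ and the colour $\alpha$ can reappear below $w_i$ — so I would keep the coloring extendable by maintaining a BFS invariant that records which up-colours are genuinely ``active'' at $v$ and forbidding only those for the down-edges. Balancing the properness budget ($\le\Delta-1$ colours) against the number of simultaneously active dangerous up-colours, and exploiting that these two sets must overlap, is exactly where the factor $3/2$ is extracted, just as in the fixed-palette proof.

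Because the down-edges at $v$ must moreover receive pairwise distinct colours, the list version cannot be settled by colouring them one at a time; instead I would treat the whole batch simultaneously and invoke Hall's theorem to produce a system of distinct representatives for the bipartite incidence between the edges $vw_1,\dots,vw_t$ and the colours admissible for each (that is, $L(vw_i)$ minus the three forbidden groups above). The main obstacle is twofold. First, one must carry out the $3/2$ counting uniformly enough that, for \emph{every} subset $S$ of the down-edges, the union of their admissible colours has size at least $|S|$; verifying this Hall condition against arbitrary lists is the genuinely new step, with no analogue in a greedy proof over a common palette. Second, the BFS invariant must be chosen so that the forward constraints deferred at $v$ are provably discharged when each child $w_i$ is later processed, which forces the invariant recorded for $w_i$ to encode the colours $\{\phi(vw_j):j\ne i\}$ and the reachability of $\alpha$. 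Once both are established, lists of size $\lfloor 1.5\Delta\rfloor$ admit a valid choice at every vertex, yielding $\mathrm{ch}_{\mathrm{star}}'(T)\le\lfloor 1.5\Delta\rfloor$.

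Finally, tightness is immediate and requires no new construction: assigning every edge the common list $\{1,\dots,k\}$ turns list star edge-colouring into ordinary star edge-colouring, so $\mathrm{ch}_{\mathrm{star}}'(T)\ge\chi_{\mathrm{star}}'(T)$, and the extremal tree witnessing tightness in Theorem~\ref{tree} already attains $\lfloor 1.5\Delta\rfloor$.
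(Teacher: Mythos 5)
This theorem is quoted in the paper from Han, Li, Luo and Miao \cite{MR3924408}; the paper itself gives no proof, so your attempt can only be judged on its own merits. As it stands it is a plan rather than a proof, and the two steps you yourself flag as ``the genuinely new step'' and ``the main obstacle'' are precisely the ones that are missing. First, the Hall condition is never verified, and a naive count shows it cannot be verified without a stronger invariant: when you colour the down-edges at $v$, properness forbids up to $\Delta-1$ colours, and the paths $x\,w_i\,v\,u\,u'$ involve up to $\Delta-1$ uncle-edge colours $\phi(uu')$; forbidding both families outright already costs up to $2\Delta-2>\lfloor 1.5\Delta\rfloor$ colours, so the proof must \emph{permit} some down-edges to repeat uncle colours and then restrict their subtrees. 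Which down-edges may do so, and how many, is exactly the content of the invariant you leave undefined, so the factor $3/2$ is asserted rather than extracted. Second, Hall's theorem only delivers a system of distinct representatives; distinctness does not by itself prevent two siblings $vw_i$ and $vw_j$ from later closing a bichromatic path $x\,w_i\,v\,w_j\,y$, nor does it guarantee that the constraints you defer to $w_i$ (which depend on the colours chosen for all $vw_j$, $j\ne i$) leave $w_i$ with enough admissible colours at its own stage. This circularity is acknowledged in your write-up but not resolved, and resolving it is the whole difficulty of the list version: with a common palette one can fix a global colour scheme in advance, whereas with arbitrary lists the admissible sets at a child genuinely depend on the choices made at the parent. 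The tightness argument via $\mathrm{ch}_{\mathrm{star}}'(T)\ge\chi_{\mathrm{star}}'(T)$ and the extremal tree of \autoref{tree} is correct and complete.
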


Dvo\v{r}\'{a}k, Mohar, and \v{S}\'{a}mal \cite{MR3019390} also studied star edge-coloring of subcubic graphs.

\begin{theorem}[\cite{MR3019390}]
If $G$ is a subcubic graph, then $\chi_{\mathrm{star}}'(G) \leq 7$. 
\end{theorem}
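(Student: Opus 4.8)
The plan is to argue by contradiction through a minimal counterexample. Let $G$ be a connected subcubic graph with $\chi_{\mathrm{star}}'(G) > 7$ chosen so that $|E(G)|$ is as small as possible; then every subcubic graph with fewer edges admits a star $7$-edge-coloring. The first task is to force $G$ to have large minimum degree. A vertex of degree one is handled by deleting its pendant edge $e = uv$, star $7$-coloring $G-e$ by minimality, and reinserting $e$: the forbidden colors for $e$ are those of the at most two edges meeting it at $u,v$ (properness) together with those that would complete a bicolored path of length four through $e$, and a direct count shows strictly fewer than seven colors are excluded. The same template, applied after suppressing a vertex or contracting a short segment, should show that long chains of degree-two vertices and, more generally, isolated degree-two vertices adjacent to prescribed configurations are reducible, so that $G$ is essentially cubic.

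The core of the argument is then a catalogue of reducible configurations in the (nearly) cubic graph: triangles, pairs of adjacent triangles, short cycles through low-degree vertices, and specific local patterns of degree-two and degree-three vertices. For each configuration I would delete one well-chosen edge or contract a small subgraph, invoke minimality of $G$ to star $7$-color the reduced graph, and extend the coloring back over the configuration. Throughout, the quantity to be controlled for each edge under consideration is the number of colors forbidden by properness (at most four, from the $\le 2+2$ incident edges) plus the number forbidden by the star condition, and the aim in every case is to keep this total at most six so that a seventh color survives.

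The main obstacle is exactly this extension bookkeeping. Unlike properness, the star condition is governed by edges at distance two rather than by adjacent edges, so the set of colors forbidden at a given edge is nonlocal and is not bounded by the degree alone; one must track every path of length four and every cycle of length four passing through that edge. Making the forbidden set provably smaller than seven therefore requires a fine-grained case analysis, and in the tight cases I expect to have to free a color first by recoloring one or two already-colored edges in the neighborhood, i.e.\ a bounded Kempe-type swap, before the extension succeeds.

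To close the argument I would reduce to the cubic case and exploit its structure: a bridgeless cubic graph decomposes into a perfect matching $M$ and a complementary $2$-factor $F$ (a disjoint union of cycles), which gives a concrete handle for the final counting. One colors the cycles of $F$ with few colors and colors $M$ while respecting the star constraints induced by $G$, using that only a bounded number of matching edges lie within $G$-distance two of any fixed one. A naive application of such a decomposition overshoots seven, so the last step is to refine this bound configuration by configuration, showing that a cubic graph avoiding every reducible configuration cannot exist. This contradiction with the choice of $G$ completes the proof.
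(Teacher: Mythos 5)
This statement is not proved in the paper at all: it is quoted as background from Dvo\v{r}\'{a}k, Mohar and \v{S}\'{a}mal \cite{MR3019390}, so there is no in-paper argument to compare yours against, and your attempt has to stand on its own. As written it does not: it is a programme rather than a proof. The catalogue of reducible configurations that would carry the whole argument is never exhibited, the extension and ``Kempe-type'' recoloring steps are described only as things you expect to be able to do, and the concluding claim --- that a cubic graph avoiding all of the (unspecified) reducible configurations cannot exist --- is asserted with no discharging or counting argument behind it. The one reduction you actually carry out (a pendant edge) is correct, but it is also the only easy one.

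The quantitative core is where the plan breaks down. For a generic uncolored edge $uv$ of a cubic graph with everything else colored, properness forbids up to $4$ colors, and the star condition can forbid up to $4$ further colors from each endpoint (one for each path $vux y$ with $y$ seeing the color $\phi(ux)$) plus more from paths in which $uv$ is an internal edge; the forbidden set can therefore have size well above $7$, which is exactly why no greedy or naive minimal-counterexample argument is known to give the bound $7$, and why the conjectured bound $6$ is still open. Your fallback for the cubic case --- split $G$ into a perfect matching $M$ and a $2$-factor $F$ and apply \autoref{partition} --- first needs bridgelessness (a cubic graph with bridges need not have a perfect matching), and then provably overshoots: $\chi_{\mathrm{star}}'(F)\geq 3$ while $\chi_s'(M|_G)$ can equal $5$ (in the Petersen graph every two spokes lie within distance one of each other, and there $F$ is two $5$-cycles, so the bound delivers $4+5=9$). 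You concede the overshoot, but the promised ``refinement configuration by configuration'' is precisely the missing content of the proof. To make this route work you would have to write down the configurations, verify every extension including the recolorings, and prove unavoidability; for the argument that actually establishes the theorem you should consult \cite{MR3019390} directly.
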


They made the following conjecture.

\begin{conjecture}[\cite{MR3019390}]
If $G$ is a subcubic graph, then $\chi_{\mathrm{star}}'(G) \leq 6$. 
\end{conjecture}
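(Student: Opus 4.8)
The statement is in fact the well-known open conjecture of Dvo\v{r}\'{a}k, Mohar, and \v{S}\'{a}mal, so what follows is the strategy I would pursue rather than a finished argument. The natural framework is a \emph{minimal counterexample} analysis. Suppose $G$ is a subcubic graph with $\chi_{\mathrm{star}}'(G) \geq 7$ that is edge-minimal with this property; by their proved bound we know $G$ in fact uses exactly $7$ colors, and the aim is to exhibit a small local configuration whose presence contradicts minimality. First I would record the easy structural reductions: $G$ may be assumed connected, bridgeless (a bridge $uv$ can be removed, both sides six-colored by minimality and aligned by permuting colors on one side, and then the bridge colored, since only paths of length four crossing it impose constraints), and of minimum degree at least $2$ (a pendant edge $uv$ with $\deg v = 1$ is colored last: after star-coloring $G-v$ with six colors, the colors forbidden for $uv$ come only from the at most two edges at $u$ together with the finitely many bicolored paths $v\,u\,a\,b\,c$, and a short count leaves a free color). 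A degree-$2$ vertex on a long induced path of degree-$2$ vertices is reducible by suppression and re-extension, so one is driven toward the essentially cubic bridgeless case.

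The heart of the argument is a list of \emph{reducible configurations} among the degree-$3$ vertices. The template is uniform: delete or contract a small piece $H$, six-color the smaller graph by minimality, and extend across $H$. The key combinatorial input at each extension step is a bound on the number of colors forbidden for an uncolored edge $e=xy$: besides the colors of edges adjacent to $e$, one must forbid colors that would complete a bicolored path or cycle of length four through $e$, which involves edges within distance two of $e$. Because $G$ is subcubic this set of nearby edges has bounded size, so each edge has only a bounded number of forbidden colors, and the work is to arrange the configurations so that this count stays strictly below six. The tightest candidates to prove reducible are two adjacent degree-$3$ vertices sharing a short cycle, triangles, and $4$-cycles with prescribed incident degrees, since these force the most nearby edges and hence the strongest color constraints.

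For the global step I would lean on the decomposition tool already highlighted in the introduction, namely Wang's bound $\chi_{\mathrm{star}}'(G) \leq \chi_{\mathrm{star}}'(F)+\chi_s'(H|_G)$ for an edge-partition $E(G)=E(F)\cup E(H)$. For a bridgeless cubic graph Petersen's theorem gives a perfect matching $M$ whose complement is a $2$-factor, i.e.\ a disjoint union of cycles. Taking $F$ to be the $2$-factor and $H=M$ yields $\chi_{\mathrm{star}}'(F)\leq 4$ (a star edge-coloring of cycles needs at most four colors, the value $4$ being forced only by $C_5$) and $\chi_s'(M|_G)$ equal to the chromatic number of the conflict graph on $M$, where two matching edges conflict iff they lie within distance two in $G$; in a cubic graph each matching edge has only a bounded number of partners, so this is a small constant. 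The difficulty is that this naive split gives roughly $4+(\text{a few})$, overshooting $6$; to reach the conjectured value one must select the $2$-factor and the matching coordinately, or reassign a few edges between $F$ and $H$, so that the two colorings share colors wherever the constraints allow. Proving that a cubic graph always admits a $2$-factor together with an interlocking matching coloring using only six colors is where I expect the real obstruction to sit.

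The single hardest feature throughout is the \emph{non-locality} of the star constraint. Unlike proper coloring, where a greedy extension consults only edges adjacent to the one being colored, a star edge-coloring must avoid bicolored paths and cycles of length four, so coloring one edge couples the decisions across a distance-two neighborhood that a subcubic graph makes just large enough to exhaust six colors in the worst configurations. Consequently the reducible-configuration list must be both long and delicately chosen; moreover, absent planarity there is no Euler-formula deficit to drive a clean discharging argument, so the global accounting must come either from an unusually strong reducibility lemma for the densest subcubic configurations or from a genuinely new invariant. It is precisely this combination, tight local constraints with no global counting identity, that I expect to be the main barrier, and that has kept the gap between the proved bound $7$ and the conjectured bound $6$ open.
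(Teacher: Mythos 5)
This statement is not something the paper proves or even attempts to prove: it is recorded verbatim as the (still open) conjecture of Dvo\v{r}\'{a}k, Mohar and \v{S}\'{a}mal, and the paper immediately follows it with the sentence ``This conjecture is still open.'' So there is no proof in the paper to compare your attempt against, and you were right to frame your submission as a research strategy rather than a finished argument. As a proof it has, of course, the total gap of not being one; nothing you wrote establishes the bound $6$, and you say so explicitly. That is the correct and honest assessment of the problem's status.

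A few concrete remarks on the strategy itself, since that is the only substantive content to review. The Petersen-decomposition route you describe cannot, by itself, get below $7$: for a $2$-factor $F$ you have $\chi_{\mathrm{star}}'(F)\leq 4$ (with $4$ forced by a $C_5$ component), and for a perfect matching $M$ in a cubic graph each matching edge conflicts with up to four others at distance two, so $\chi_s'(M|_G)$ is generically at least $3$ and can be $5$ by a greedy count; hence \autoref{partition} yields at best $4+3=7$, which only recovers the known theorem. Reaching $6$ genuinely requires coordinating the two colorings rather than adding their sizes, which is exactly where you locate the obstruction --- correctly, in my view. Also, your bridge reduction is stated too casually: a path of length four can cross a bridge $uv$ with two edges on one side and one on the other, so simply permuting the six colors on one component is not obviously enough to reconcile the two partial colorings; this step would need a careful case analysis of the color patterns at $u$ and $v$. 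None of this is a criticism of your understanding --- the proposal is a reasonable survey of the standard attacks and of why they stall --- but it should not be mistaken for progress on the conjecture, and it does not correspond to anything the paper itself does.
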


This conjecture is still open. Lu\v{z}ar \cite{MR3904837} provided the current best upper bound for list star chromatic index of subcubic graphs. 

\begin{theorem}
If $G$ is a subcubic graph, then $\mathrm{ch}_{\mathrm{star}}'(G) \leq 7$. 
\end{theorem}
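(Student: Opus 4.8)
The plan is to prove the bound by the standard route for star-type colorings of sparse graphs: assume a minimal counterexample and eliminate it through a collection of reducible configurations together with a global counting argument, taking care that every reducibility step uses only the \emph{sizes} of the lists and never a fixed global palette, so that the argument transfers from $\chi_{\mathrm{star}}'$ to $\mathrm{ch}_{\mathrm{star}}'$. Concretely, suppose the theorem fails and let $(G,L)$ be a counterexample with $|E(G)|$ minimum, where $|L(e)|=7$ for every $e\in E(G)$ and $G$ admits no proper edge-coloring $\phi$ with $\phi(e)\in L(e)$ that avoids bicolored paths and cycles of length four. Minimality first disposes of the trivial local structures: I would show $G$ is connected and has no vertex of degree one (delete the pendant edge, color $G-e$ by minimality, and note that strictly fewer than seven colors are forbidden on $e$), so that $\delta(G)\ge 2$. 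The substantive task is then to classify which configurations of degree-two and degree-three vertices can survive in $G$, the principal actors being maximal paths of degree-two vertices (\emph{threads}) and their attachment to degree-three vertices.

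For the reducibility engine I would prove a single extension lemma: if $e=uv$ is an edge whose color is chosen last, then the colors forbidden for $e$ are exactly the colors appearing on edges adjacent to $e$, together with those colors that would close a bicolored $P_4$ or $C_4$ through $e$. In a subcubic graph $e$ meets at most $(d(u)-1)+(d(v)-1)\le 4$ other edges, so the proper constraint forbids at most four colors; each potential bicolored path of length four through $e$ forbids at most one further color, since three of its edges are already colored in an alternating two-color pattern and only one completing value of $\phi(e)$ is dangerous. Counting these paths precisely in terms of $d(u)$, $d(v)$, and the colors present in the second neighborhood is the heart of the matter. The goal of the bookkeeping is to exhibit, for each configuration we wish to declare reducible, a color of $L(e)$ excluded by neither type of constraint, i.e., to guarantee at most six forbidden colors.

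With the reducible configurations established I would derive a contradiction by a global counting (discharging) argument. Because subcubic graphs need not be planar, this is not an Euler-formula discharging but a degree-counting one: the absence of all reducible configurations bounds the number and the lengths of the degree-two threads in terms of the number of degree-three vertices, and comparing this with the identity $\sum_{v} d(v)=2|E(G)|$ forces a numerical impossibility. The rules would move charge from degree-three vertices to the degree-two vertices of incident threads, the reducibility results guaranteeing that no vertex can retain (or fail to shed) enough charge to balance the total.

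The step I expect to be the main obstacle is the forbidden-color count in the extension lemma. Unlike proper or even strong edge-coloring, the star constraint couples $e$ to edges at distance two through the \emph{direction} in which a bicolored path can be extended, so the count depends delicately on how colors repeat in the second neighborhood; a naive bound yields more than six forbidden colors, and tightening it requires either choosing the order in which several uncolored edges of a thread are colored, or recoloring one already-colored edge to liberate a value. Making this local surgery succeed for \emph{every} list of size seven, rather than for one convenient palette, is precisely where the list version is harder than the non-list bound $\chi_{\mathrm{star}}'(G)\le 7$, and it is where I would concentrate the case analysis.
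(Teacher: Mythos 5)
This statement is not proved in the paper at all: it is quoted from Lu\v{z}ar, Mockov\v{c}iakov\'a and Sot\'ak \cite{MR3904837}, so there is no internal proof to compare your attempt against. Judged on its own terms, your proposal is a strategy outline rather than a proof, and the strategy has a concrete structural flaw. A minimal counterexample to the theorem may perfectly well be $3$-regular (your degree-one reduction does nothing to prevent this, and the hard instances for subcubic star edge-coloring, such as $K_{3,3}$ or the Petersen graph, are cubic). For a $3$-regular graph there are no degree-two vertices, hence no ``threads,'' and your global counting step --- moving charge from degree-three vertices to the degree-two vertices of incident threads and comparing with $\sum_v d(v)=2|E(G)|$ --- yields no contradiction whatsoever: the identity $3|V(G)|=2|E(G)|$ is entirely consistent. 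Discharging over degree-two threads is the engine behind the $\mathrm{mad}(G)<\frac{24}{11}$ and $\mathrm{mad}(G)<\frac{12}{5}$ results that give the bound $5$; it cannot, even in principle, dispose of cubic graphs, which is precisely the case the bound $7$ exists to handle.

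The local engine is also left unproven exactly where the difficulty lies. For a last edge $e=uv$ in a cubic graph, the four adjacent edges forbid up to four colors, and the bicolored-$P_5$ threats are not controlled by ``at most one further color per path'' in any usable way: each edge at distance one from $e$ can sit in several alternating patterns whose completing colors differ, and a crude count of forbidden colors comfortably exceeds six. You acknowledge this (``a naive bound yields more than six forbidden colors'') and defer the repair to an unspecified reordering or recoloring; but that repair is the entire content of the theorem, and supplying it uniformly for arbitrary lists of size seven is the whole point of the list version. The published proofs of the bound $7$ (list and non-list) do not proceed by single-edge greedy extension inside a discharging framework; they rest on a structural decomposition of the subcubic graph into pieces that are colored separately and interact in a controlled way, in the spirit of the edge-partition inequality $\chi_{\mathrm{star}}'(G)\leq \chi_{\mathrm{star}}'(F)+\chi_s'(H|_G)$ quoted in this paper. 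As written, your proposal does not establish the theorem.
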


With some restrications on maximum average degree, Lei \etal \cite{MR3818598} gave the following result. 

\begin{theorem}[\cite{MR3818598}]
If $G$ is a subcubic multigraph with $\mad(G) < \frac{24}{11}$, then $\chi_{\mathrm{star}}'(G) \leq 5$. 
\end{theorem}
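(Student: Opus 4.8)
The plan is to argue by contradiction using the discharging method. Suppose the statement is false and let $G$ be a counterexample minimizing $|V(G)| + |E(G)|$; thus $G$ is a connected subcubic multigraph with $\mad(G) < \frac{24}{11}$ that has no star $5$-edge-coloring, while every subcubic multigraph with smaller $|V| + |E|$ and maximum average degree below $\frac{24}{11}$ has one. Since $\mad$ does not increase when passing to a subgraph, any graph obtained from $G$ by deleting vertices or edges still satisfies the density hypothesis, so minimality lets me freely star-$5$-color such reductions. Rewriting the hypothesis applied to $G$ itself, $\frac{2|E(G)|}{|V(G)|} < \frac{24}{11}$ gives $\sum_{v \in V(G)} (11\,d(v) - 24) < 0$. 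Accordingly I assign each vertex the initial charge $\mu(v) = 11\,d(v) - 24$, so the total charge is negative; here a degree-$3$ vertex carries $+9$, a degree-$2$ vertex $-2$, and a degree-$1$ vertex $-13$. The goal is to exhibit a list of forbidden local configurations and then discharging rules that, when those configurations are absent, force every final charge to be nonnegative, contradicting the negative sum.

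The structural/reducibility step is the core of the argument. First I would establish the cheap facts: $G$ has minimum degree at least $2$ and is not itself a cycle, which reduces to the situation where every degree-$2$ vertex lies on a \emph{thread}, i.e.\ a maximal path of degree-$2$ vertices joining two (not necessarily distinct) degree-$3$ vertices. The substantive configurations to forbid concern how long threads can be and how many long threads a single degree-$3$ vertex can anchor; candidates include a thread with too many internal vertices and a degree-$3$ vertex incident with several long threads. For each such configuration I would delete or contract it, star-$5$-color the smaller graph by minimality, and then extend the coloring to the removed edges. Extending is delicate because the star condition is nonlocal: forbidding a color on an edge $e$ depends not only on the colors adjacent to $e$ but on whether that choice completes a bichromatic path or cycle of length $4$ passing through $e$. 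I therefore expect each reduction to require a careful case analysis that tracks the color patterns in the second neighborhood and occasionally recolors an already-colored edge to destroy an incipient bichromatic $P_4$.

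For the discharging itself, once $\delta(G) \ge 2$ the only charges present are $+9$ at degree-$3$ vertices and $-2$ at degree-$2$ vertices, so I must route the surplus of the degree-$3$ vertices onto the threads. The natural rule is that each degree-$3$ vertex sends a fixed amount of charge into each incident thread, to be shared among that thread's degree-$2$ vertices, with the rates tuned so that the excluded configurations are exactly what guarantees success: short, sparsely-anchored threads let every degree-$2$ vertex reach charge $\ge 0$ while no degree-$3$ vertex over-sends. Verifying that the chosen rates leave every vertex nonnegative, and that the constant $\frac{24}{11}$ is precisely the threshold at which this balance holds, is then a bookkeeping computation.

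The main obstacle is the reducibility of the thread configurations under the star constraint. In contrast to proper or strong edge-coloring, where the number of forbidden colors on an edge is controlled by a simple degree count, here a single color choice can create a bichromatic path two steps away, so the extension arguments are the most intricate part and must be organized so that the minimal reducible list both is provably reducible and is strong enough for the discharging to close at exactly $\mad(G) < \frac{24}{11}$.
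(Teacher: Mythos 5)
First, a point of context: the paper you are working from does not prove this statement at all --- it is quoted (as a theorem of Lei, Shi and Song) purely as background, so there is no in-paper proof to compare against. Judged on its own terms, your proposal correctly identifies the framework the original authors use: a minimal counterexample, the initial charge $\mu(v)=11\,d(v)-24$ whose total is negative exactly when $\mathrm{mad}(G)<\frac{24}{11}$, reducible configurations built from threads of degree-$2$ vertices, and discharging of the surplus $+9$ at degree-$3$ vertices into those threads. The charge arithmetic you give is correct, and so is the observation that $\mathrm{mad}$ is monotone under taking subgraphs, which is what makes the minimality hypothesis usable after deletions.

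However, the skeleton is all there is, and the missing parts are precisely where the theorem lives. You never specify which configurations are reducible (how long a thread may be, how many long threads a single degree-$3$ vertex may anchor, what happens at cut edges or short cycles through degree-$2$ vertices); you never prove reducibility of any of them --- and these extension arguments under the star condition are genuinely long case analyses in which one must track colors in the second neighborhood and sometimes recolor, exactly the difficulty you name but do not resolve; and you never state the discharging rules or verify nonnegativity of the final charges. Saying the rates can be ``tuned so that the excluded configurations are exactly what guarantees success'' assumes the conclusion: the entire content of the $\frac{24}{11}$ threshold is the existence of a specific list of configurations that is simultaneously reducible for star $5$-edge-coloring and strong enough for a concrete set of rules to close the counting. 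One further technical caution: you propose to ``delete or contract'' a configuration, but contraction need not preserve $\mathrm{mad}(G)<\frac{24}{11}$ (nor subcubicity), so minimality may not apply to the contracted graph; the safe reductions are deletions, possibly followed by reattaching a bounded structure whose density contribution you control explicitly. As written, this is a plan for a proof rather than a proof.
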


The upper bound on the maximum average degree was further improved. 
\begin{theorem}[\cite{MR3764342}]
If $G$ is a subcubic multigraph with $\mad(G) < \frac{12}{5}$, then $\chi_{\mathrm{star}}'(G) \leq 5$. 
\end{theorem}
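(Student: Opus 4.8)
The plan is to show that any subcubic multigraph $G$ with $\mad(G) < \frac{12}{5}$ admits a star $5$-edge-coloring, proceeding by the standard discharging method on a hypothetical minimal counterexample. First I would suppose, for contradiction, that $G$ is a counterexample minimizing $|V(G)| + |E(G)|$; by minimality $G$ is connected, and every proper subgraph of $G$ can be star $5$-edge-colored. The core of the argument is to establish a catalogue of \emph{reducible configurations}: small local structures (typically involving vertices of degree $1$ and $2$, adjacent degree-$2$ vertices, and specific attachments of these to degree-$3$ vertices) that cannot appear in $G$. For each such configuration I would delete or contract a carefully chosen edge or vertex to obtain a smaller multigraph, invoke minimality to obtain a star $5$-edge-coloring of the smaller graph, and then argue that this coloring extends back to $G$. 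The extension step is where the $5$-color budget is spent: each edge being colored must avoid the (at most few) colors on its neighbors for properness, \emph{and} avoid creating a bichromatic path or cycle of length four, which forces one to track the colors at distance two along every incident path.

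The key quantitative step is to translate the forbidden-configuration list into a lower bound on $\mad(G)$ that contradicts the hypothesis $\mad(G) < \frac{12}{5}$. For this I would assign to each vertex $v$ an initial charge $\mu(v) = \deg(v) - \frac{12}{5}$, so that the average-degree hypothesis gives $\sum_{v} \mu(v) < 0$. I would then design discharging rules whereby degree-$3$ vertices (which have positive charge $\frac{3}{5}$) send charge to nearby degree-$1$ and degree-$2$ vertices (which have negative charge). The goal is to show that, once all reducible configurations are excluded, every vertex ends with nonnegative final charge, yielding $\sum_v \mu(v) \ge 0$ and the desired contradiction. Choosing the denominator $\frac{12}{5}$ is not arbitrary: the reducible configurations must be rich enough that the surviving local structure forces enough degree-$3$ vertices relative to low-degree ones to close the charge inequality at exactly this threshold.

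The main obstacle I anticipate is the reducibility analysis itself, specifically verifying that each configuration genuinely extends. Star edge-colorings are governed by a nonlocal constraint — a bichromatic path of length four spans five vertices and four edges — so when I recolor or extend on an edge $uv$, I must simultaneously rule out bichromatic $P_4$'s and $C_4$'s passing through $uv$ in \emph{all} directions. With only five colors available and degree up to three, the number of forbidden color combinations can be tight, so the hardest cases will be configurations where a degree-$2$ or degree-$3$ vertex sits in a dense local neighborhood and several potential bichromatic paths compete for the same scarce colors. I would handle these by a careful case analysis on the colors inherited from the smaller graph, possibly recoloring one or two additional edges to free up a usable color, and where a single deletion does not suffice I would delete a short path or a small subgraph and re-extend. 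Organizing this case analysis so that it remains finite and complete — and dovetails exactly with the discharging rules — is the delicate part of the proof.
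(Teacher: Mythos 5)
The paper does not actually prove this statement: it is quoted as a known result from the cited reference (Lei, Shi, Song and Wang, \emph{Star 5-edge-colorings of subcubic multigraphs}, Discrete Math.\ 341 (2018)), so there is no in-paper proof to compare against. Judged on its own terms, your proposal correctly identifies the method used in that reference --- a minimal counterexample, a catalogue of reducible configurations, and a discharging argument with initial charge $\deg(v)-\tfrac{12}{5}$ --- but it is an outline of a strategy rather than a proof. Every step carrying mathematical content is deferred: you do not exhibit a single reducible configuration, you do not verify that any configuration actually admits an extension of a star $5$-edge-coloring (which you yourself flag as the delicate part), and you state no discharging rule nor check that final charges are nonnegative. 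In particular, nothing in the proposal explains why the threshold is $\tfrac{12}{5}$ rather than, say, $\tfrac{7}{3}$ or $\tfrac{5}{2}$; you remark that the configurations ``must be rich enough'' to close the charge inequality at exactly this value, but that is precisely the content of the theorem, not an argument for it.

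A further point your sketch glosses over: in the actual argument one cannot in general extend a star edge-coloring of $G-e$ one edge at a time, because the bichromatic-$P_5$ constraint reaches two edges deep in every direction and, for multigraphs, parallel edges create additional bichromatic $4$-cycles; the cited authors must therefore work with larger configurations and recolor edges of the smaller graph in several cases. You acknowledge this difficulty in general terms but provide no mechanism for resolving it. As written, the proposal is a plausible research plan for rediscovering the proof, not a proof.
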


Wang \etal \cite{MR3796353} investigated star edge-coloring of graphs by edge-partition. 
\begin{theorem}[\cite{MR3796353}]\label{partition}
If a graph $G$ can be edge-partitioned into two graphs $F$ and $H$, then $$\chi_{\mathrm{star}}'(G)\leq \chi_{\mathrm{star}}'(F)+\chi_s'(H|_G),$$ where $\chi_s'(H|_G)$ denotes the strong chromatic index of $H$ restricted on $G$.
\end{theorem}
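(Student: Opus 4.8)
The plan is to take optimal colorings of the two parts on disjoint color sets and paste them together, then check directly that no bichromatic path or cycle of length four can survive. First I would fix a star edge-coloring $\phi_F$ of $F$ using the colors $\{1,\dots,\chi_{\mathrm{star}}'(F)\}$ and a strong edge-coloring $\phi_H$ of $H$ restricted on $G$ (so that distances are measured in $G$) using a fresh palette $\{\chi_{\mathrm{star}}'(F)+1,\dots,\chi_{\mathrm{star}}'(F)+\chi_s'(H|_G)\}$. Define $\phi$ on $E(G)=E(F)\cup E(H)$ by $\phi|_{E(F)}=\phi_F$ and $\phi|_{E(H)}=\phi_H$. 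Since the two palettes are disjoint, properness of $\phi$ is immediate: adjacent edges inside $F$ differ under $\phi_F$, adjacent edges inside $H$ differ under $\phi_H$ (a strong coloring is in particular proper), and an $F$-edge and an $H$-edge always differ because their colors come from disjoint sets.

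The core of the argument is to rule out a bichromatic path or cycle $Q$ of length four, say with edges $e_1,e_2,e_3,e_4$ colored alternately $a,b,a,b$. I would split into three cases according to which palettes $a$ and $b$ belong to. If both $a,b$ lie in the $F$-palette, then all four edges of $Q$ lie in $F$, producing a bichromatic path or cycle of length four in $F$ and contradicting that $\phi_F$ is a star coloring. If both lie in the $H$-palette, then all four edges lie in $H$; the two edges of $Q$ sharing the color $a$ are joined by an intermediate edge of $Q$, hence lie at distance at most two in $G$, contradicting that $\phi_H$ is a strong coloring restricted on $G$.

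The remaining mixed case, with $a$ in the $F$-palette and $b$ in the $H$-palette, is where the hypothesis "restricted on $G$" does the essential work, and I expect it to be the main obstacle. Here the two $b$-colored edges of $Q$ belong to $H$, yet the edge of $Q$ linking them is an $F$-edge and so does not lie in $H$ at all. The two $H$-edges are nonetheless at distance at most two in $G$, so they must receive distinct colors under $\phi_H$, contradicting their both being colored $b$. The subtlety is that it is tempting to work with the strong chromatic index of $H$ taken intrinsically in $H$, under which these two $b$-edges could be far apart and legitimately share a color; it is precisely the strengthening to $\chi_s'(H|_G)$, measuring distances in the ambient graph $G$, that closes this gap. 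Having excluded all three cases, $\phi$ is a star edge-coloring of $G$ using $\chi_{\mathrm{star}}'(F)+\chi_s'(H|_G)$ colors, which is the claimed bound.
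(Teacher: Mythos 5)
The paper states this result only as a citation to \cite{MR3796353} and supplies no proof of its own, so there is no internal argument to compare against; your proposal is, however, the standard proof of this edge-partition lemma and it is correct. The three-way case analysis on the palettes of the two colors of a would-be bichromatic $4$-path or $4$-cycle is exactly what is needed, and you correctly identify that the mixed case is where measuring distance in $G$ rather than in $H$ (i.e., using $\chi_s'(H|_G)$ instead of $\chi_s'(H)$) closes the only potential gap.
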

With the aid of \autoref{partition}, they proved the following result on outerplanar graphs.

\begin{theorem}[\cite{MR3796353}]
If $G$ is a outerplanar graph, then $\chi_{\mathrm{star}}'(G) \leq \lfloor 1.5\Delta \rfloor + 5$. 
\end{theorem}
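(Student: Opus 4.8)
The plan is to invoke the edge-partition bound of \autoref{partition} with a forest as one of the two parts. By \autoref{partition}, if we can write $E(G) = F \cup H$ where $F$ is a forest and $H$ is a subgraph with $\chi_s'(H|_G) \le 5$, then since $F \subseteq G$ forces $\Delta(F) \le \Delta$, \autoref{tree} (applied to each component of the forest $F$ with a common palette, which is harmless because distinct components cannot share a path or cycle) gives $\chi_{\mathrm{star}}'(F) \le \lfloor 1.5\Delta(F)\rfloor \le \lfloor 1.5\Delta\rfloor$, and hence $\chi_{\mathrm{star}}'(G) \le \chi_{\mathrm{star}}'(F) + \chi_s'(H|_G) \le \lfloor 1.5\Delta\rfloor + 5$. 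So the entire problem reduces to the structural statement that every outerplanar graph admits such a partition.

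To build the partition I would exploit the outerplanar embedding together with the fact that outerplanar graphs are $2$-degenerate. Fix an embedding with all vertices on the outer face and a vertex order $v_1,\dots,v_n$ in which each $v_i$ has at most two neighbors among $v_1,\dots,v_{i-1}$. I would route a spanning forest into $F$ and push the leftover edges into $H$: for each vertex that contributes two back-edges, one goes to $F$ and the other to $H$, the choice being dictated by the embedding so that the $H$-edges are the ``boundary-type'' edges. The target is to make $H$ a linear forest (maximum degree at most $2$) whose edges are pairwise spread out in $G$, so that each $H$-edge has at most four other $H$-edges within distance two in $G$ and a greedy strong $5$-coloring of $H$ relative to $G$ succeeds.

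The main obstacle is exactly the bound $\chi_s'(H|_G)\le 5$, and not merely $\Delta(H)\le 2$: two edges of $H$ can lie within distance two in $G$ through a connecting edge of $F$, and a single high-degree vertex of $G$ could a priori see many $H$-edges in its neighborhood, which would blow the constant up to order $\Delta$. The content of the construction is to forbid this. A clean model case is the fan, where taking $F$ to be the spokes and $H$ the outer path makes each boundary edge conflict only with consecutive boundary edges, giving $\chi_s'(H|_G)=3$; the hub of degree $\Delta$ is absorbed entirely into the tree $F$. Generalizing this, I would argue by induction along the weak-dual tree of each maximal outerplanar block (equivalently, by peeling ears): removing a pendant ear or block leaves a smaller outerplanar graph, and one checks that reattaching it introduces $H$-edges that conflict only with the boundedly many $H$-edges already present near the attachment, so a proper strong $5$-coloring of $H$ relative to $G$ extends.

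Assembling the pieces, the structural lemma supplies $F$ and $H$, \autoref{tree} controls $\chi_{\mathrm{star}}'(F)$, the explicit or greedy coloring controls $\chi_s'(H|_G)$, and \autoref{partition} combines them to yield $\lfloor 1.5\Delta\rfloor + 5$. I expect the delicate point to be the simultaneous maintenance of the two properties of $H$ — being a linear forest \emph{and} being sparse in the distance-two metric of $G$ — through the inductive reattachment, since these requirements can compete precisely at cut vertices and at high-degree hubs, which is where the outerplanarity (absence of $K_4$ and $K_{2,3}$ minors) must be used to keep the local conflict count at most four.
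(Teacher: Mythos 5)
This statement is quoted in the paper from \cite{MR3796353} without proof, so there is no in-paper argument to compare against; I can only assess your proposal on its own terms and against the cited source. Your overall strategy --- edge-partition $G$ into a forest $F$ and a subgraph $H$ with $\chi_s'(H|_G)\le 5$, bound $\chi_{\mathrm{star}}'(F)\le\lfloor 1.5\Delta\rfloor$ by applying \autoref{tree} componentwise, and combine the two via \autoref{partition} --- is exactly the route taken in that reference, and this reduction step is sound.

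The genuine gap is that the structural lemma carrying all the weight, namely that every outerplanar graph admits an edge-partition into a forest $F$ and a subgraph $H$ with $\chi_s'(H|_G)\le 5$, is asserted and motivated but never proved. The $2$-degeneracy ordering you invoke only lets you split the back-edges so that each of $F$ and $H$ is a forest; it does not make $H$ a linear forest (a vertex can be the back-neighbor of arbitrarily many later vertices), and it gives no control whatsoever on the distance-two conflicts measured in $G$, which is the quantity $\chi_s'(H|_G)$ actually depends on. You correctly observe that a degree-$\Delta$ hub could see $\Theta(\Delta)$ edges of $H$ within distance two unless the construction deliberately routes everything near high-degree vertices into $F$, but the mechanism that guarantees this --- the invariant maintained when peeling an ear or a pendant block and reattaching it --- is never formulated; the induction is described only as ``one checks that\dots''. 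Since the two quoted theorems are black boxes, this structural lemma is the entire mathematical content of the result, so what you have is a correct and well-aimed plan rather than a proof. To close it you would need to state precisely which edges of each inner face go to $H$, prove that $H$ has maximum degree at most $2$, and prove that each $H$-edge has at most four $H$-edges within distance two in $G$ (using the absence of $K_4$ and $K_{2,3}$ minors), after which the greedy strong $5$-coloring goes through.
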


The conjectured best upper bound for the star chromatic index is $\lfloor 1.5\Delta \rfloor + 1$. 

\begin{conjecture}[\cite{MR3431294}]
If $G$ is a outerplanar graph, then $\chi_{\mathrm{star}}'(G) \leq \lfloor 1.5\Delta \rfloor + 1$. 
\end{conjecture}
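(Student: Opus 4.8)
The plan is to attack the tight bound directly rather than through the edge-partition inequality of \autoref{partition}, since splitting an outerplanar graph into a forest $F$ and a maximum-degree-$2$ remainder $H$ and invoking $\chi_{\mathrm{star}}'(F)\le\lfloor 1.5\Delta\rfloor$ (\autoref{tree}) together with $\chi_s'(H|_G)\le 5$ reproduces exactly the known $\lfloor 1.5\Delta\rfloor+5$ bound, so the additive slack of $4$ is precisely what we must eliminate. First I would reduce to the $2$-connected case: if $G$ has a cut vertex $v$, color each block separately with $\lfloor 1.5\Delta\rfloor+1$ colors and then permute the palettes of the blocks around $v$ so that edges incident with $v$ in different blocks receive distinct colors and no bichromatic $P_4$ is created through $v$. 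Because the blocks meet only at $v$, every path of length $4$ using edges from two blocks passes through $v$ and is controlled by the colors locally present at $v$, so a suitable alignment of palettes should suffice. It is therefore enough to prove the bound for $2$-connected outerplanar graphs, i.e. for triangulations of a polygon and their $2$-connected subgraphs.

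For the $2$-connected case I would argue by minimal counterexample using the weak dual, which for a maximal outerplanar graph is a tree whose leaves correspond to \emph{ears} (triangles with a vertex of degree $2$). Outerplanar graphs are $2$-degenerate, and a maximal outerplanar graph on at least four vertices always contains an ear; more usefully, it contains either two ears sharing a vertex or an ear whose degree-$2$ tip $w$ lies on a triangle $uvw$ in which $\deg(u)$ or $\deg(v)$ is small. I would isolate a short list of such reducible configurations by a discharging argument on the plane embedding, assigning charge $\deg(x)-4$ to each vertex $x$ and $\ell-4$ to each face of length $\ell$, so that Euler's formula gives total charge $-8$; discharging then forces a minimal counterexample to contain one of the listed configurations, which must be shown reducible.

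The heart of the proof is the extension step: having deleted the degree-$2$ tip $w$ of an ear, I take a star edge-coloring of the reduced graph with $\lfloor 1.5\Delta\rfloor+1$ colors and extend it to $wu$ and $wv$. The color on $wu$ must differ from the colors at $u$, from the color chosen on $wv$, and—to destroy bichromatic paths or cycles of length $4$—its value together with the colors on edges at distance at most $2$ from $w$ must avoid every forbidden two-colored pattern through $u$ and through $v$. Counting these forbidden colors against the budget $\lfloor 1.5\Delta\rfloor+1$ is where the fractional factor $1.5$ has to be exploited as sharply as in the tree bound of \autoref{tree}, with the single extra color absorbing the cyclic interaction along the outer face that is absent for trees.

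The step I expect to be the main obstacle is exactly this count when $u$ and $v$ both have degree close to $\Delta$ and their incident edges are already heavily colored: a naive tally of the colors forbidden on $wu$ by the star condition can exceed $\lfloor 1.5\Delta\rfloor+1$, so one must either recolor a bounded neighborhood to liberate a color—a Kempe-type swap adapted to the $P_4$-free requirement—or select the reducible configuration so cleverly that at least one of $u,v$ carries enough slack. Controlling such recolorings without introducing new bichromatic $P_4$'s elsewhere, and certifying that the additive constant never exceeds $1$ across all configurations, is the delicate core that so far keeps this outerplanar conjecture open.
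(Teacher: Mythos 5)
This statement is an open conjecture of Bezegov\'a et al.\ \cite{MR3431294}; the paper does not prove it, it only records it (the best bound actually established, via \autoref{partition} and \autoref{tree}, is $\lfloor 1.5\Delta\rfloor+5$). Your text is a research plan rather than a proof, and you concede as much in your final sentence. Treated as a proof attempt it has concrete gaps at every stage. The reduction to $2$-connected blocks is not routine: after coloring each block with $\lfloor 1.5\Delta\rfloor+1$ colors, the degrees inside a single block can each be as large as $\Delta(G)$ only in total, so the palettes to be ``aligned'' at a cut vertex $v$ may be forced to overlap, and avoiding a bichromatic $P_4$ through $v$ requires controlling not just the colors on edges at $v$ but also the colors on their neighbors one step deeper into each block; no argument is given that a permutation of color classes achieving this always exists. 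The discharging step is asserted without specifying the discharging rules or the list of reducible configurations, so there is nothing to verify. Most importantly, the extension step --- the only place where the bound $\lfloor 1.5\Delta\rfloor+1$ could actually be certified --- is exactly where you admit the count of forbidden colors can exceed the budget, and the proposed remedies (Kempe-type swaps compatible with the no-bichromatic-$P_4$ condition, or a cleverer choice of configuration) are named but not carried out. Since the star condition is not preserved by ordinary Kempe exchanges, that recoloring device would itself need a new lemma.

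In short, nothing in the proposal closes the gap between $\lfloor 1.5\Delta\rfloor+5$ and $\lfloor 1.5\Delta\rfloor+1$; the statement remains a conjecture, and your outline correctly identifies, but does not resolve, the obstruction.
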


\begin{theorem}[\cite{Omoomi2018}]\label{PXP}
For two paths $P_{m}$ and $P_{n}$ with  $2 \leq m \leq n$, we have
\[   
\chi'_{\mathrm{star}}(P_m\square P_n) = 
     \begin{cases}
       3, &\quad\text{if $m = n = 2$},\\
       4, &\quad\text{if $m = 2$ and $n \geq 3$}, \\
       5, &\quad\text{if $m = 3$ and $n \in \{3, 4\}$},\\
       6, &\quad\text{otherwise.} \ 
     \end{cases}
\]
\end{theorem}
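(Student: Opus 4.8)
The plan is to prove matching upper and lower bounds in each of the four cases, treating $P_m\square P_n$ as an induced subgraph of the infinite grid and exploiting the fact that star edge-colorability is monotone under taking subgraphs: if $H\subseteq G$, then any star $k$-edge-coloring of $G$ restricts to one of $H$ (properness is preserved, and a bichromatic $P_4$ or $C_4$ in $H$ would be one in $G$), so $\chi'_{\mathrm{star}}(H)\le\chi'_{\mathrm{star}}(G)$. This single observation does most of the bookkeeping: it lets me derive every lower bound from a handful of smallest grids and every ``otherwise'' upper bound from one coloring of the whole plane.

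For the upper bound $\chi'_{\mathrm{star}}(P_m\square P_n)\le 6$ (which already covers the ``otherwise'' case and, \emph{a fortiori}, all smaller cases), I would first exhibit an explicit star $6$-edge-coloring of the infinite grid and restrict it. Writing $(i,j)$ for the vertices, I color the horizontal edge $(i,j)(i,j{+}1)$ with $((i+j)\bmod 3)+1\in\{1,2,3\}$ and the vertical edge $(i,j)(i{+}1,j)$ with $((i+2j)\bmod 3)+4\in\{4,5,6\}$. Since horizontal and vertical edges use disjoint palettes, properness is immediate, and a bichromatic $P_4$ or $C_4$ must have its first and third edges of one type and its second and fourth of the other. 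Thus the only configurations to rule out are (a) four collinear edges, handled by the period-$3$ pattern along each row and column, and (b) the ``staircase'' paths and the unit squares, where I must check that the two horizontal edges and the two vertical edges cannot simultaneously repeat; the shifts $(i+j)$ and $(i+2j)$ are engineered so that in each diagonal direction at least one of these two coincidences fails. For the three sporadic upper bounds I would give direct colorings: a $3$-coloring of $C_4=P_2\square P_2$, a $4$-coloring of the ladder $P_2\square P_n$, and $5$-colorings of $P_3\square P_3$ and $P_3\square P_4$; these are finite and routine once the constraints are written out.

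For the lower bounds I would reduce everything to the smallest grids. Every ``otherwise'' grid contains either $P_3\square P_5$ (when $m=3$, $n\ge5$) or $P_4\square P_4$ (when $m\ge4$, since then $n\ge m\ge4$); every case with value $5$ contains $P_3\square P_3$; every ladder with $n\ge3$ contains $P_2\square P_3$; and $P_2\square P_2=C_4$ needs more than two colors trivially. By subgraph-monotonicity it therefore suffices to prove $\chi'_{\mathrm{star}}(P_2\square P_3)\ge 4$, $\chi'_{\mathrm{star}}(P_3\square P_3)\ge 5$, $\chi'_{\mathrm{star}}(P_3\square P_5)\ge 6$, and $\chi'_{\mathrm{star}}(P_4\square P_4)\ge 6$. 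Each is a finite problem that I would attack by fixing the $4$ (or $3$) distinct colors around a vertex of maximum degree and propagating the constraints ``every $P_4$ and every unit square needs three colors'' outward until a contradiction appears.

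The hard part will be the last two finite verifications, $P_3\square P_5$ and $P_4\square P_4$ with $5$ colors. The subtlety is that the obstruction is not local: $P_3\square P_3$ \emph{is} $5$-colorable, so no single vertex or $2\times2$ block forces a sixth color, and the contradiction only emerges after the coloring is propagated across a region several cells wide. I expect to control the case explosion by first pinning down the essentially unique way, up to symmetry and permutation of colors, to star-$5$-color the cross at a degree-$4$ vertex, and then showing that two overlapping such crosses cannot be reconciled; a computer search is the natural fallback to certify these two base cases should the hand analysis become unwieldy.
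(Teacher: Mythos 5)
The paper does not prove this statement: Theorem~\ref{PXP} is imported verbatim from \cite{Omoomi2018} and used only as a black box (to bound $\chi_{\mathrm{star}}'(F)$ in the proof for $P_n^2$), so there is no in-paper argument to compare yours against. Judged on its own terms, your architecture is sound. Subgraph-monotonicity of $\chi'_{\mathrm{star}}$ holds for exactly the reason you give, your reduction of all lower bounds to the four grids $P_2\square P_3$, $P_3\square P_3$, $P_3\square P_5$, $P_4\square P_4$ is complete (every ``otherwise'' grid contains one of the last two), and your infinite-grid $6$-coloring checks out: with $H(i,j)=(i+j)\bmod 3$ and $V(i,j)=(i+2j)\bmod 3$ on disjoint palettes, a bichromatic length-$4$ path must be monochromatic in type (killed by the period-$3$ rows and columns) or of type $HVHV$/$VHVH$; the two horizontal edges agree only along the shift $(-1,+1)$, in which case the two vertical labels differ by $1$ or $2 \pmod 3$, and symmetrically for two vertical edges agreeing along $(\pm1,\pm1)$, while every unit square receives two distinct horizontal colors.

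The genuine gap is that the load-bearing part of the theorem --- the lower bounds $\chi'_{\mathrm{star}}(P_3\square P_5)\ge 6$ and $\chi'_{\mathrm{star}}(P_4\square P_4)\ge 6$ --- is not carried out but only announced as ``a finite problem'' to be settled by propagation or computer search. You correctly diagnose why this is hard (the obstruction is non-local, since $P_3\square P_3$ and $P_3\square P_4$ \emph{are} $5$-colorable, so no vertex neighbourhood or $2\times 2$ block forces a sixth color), but that diagnosis is precisely the reason the case analysis cannot be waved through: one must classify the star $5$-colorings of the smaller grids up to symmetry and show none extends, which is where essentially all of the work in \cite{Omoomi2018} lives. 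The sporadic checks ($\chi'_{\mathrm{star}}(P_2\square P_3)\ge 4$, $\chi'_{\mathrm{star}}(P_3\square P_3)\ge 5$, and the explicit $4$- and $5$-colorings of the ladders and of $P_3\square P_4$) really are short and routine, so those deferrals are harmless; but until the two $\ge 6$ verifications are actually exhibited (by hand or by a certified search), what you have is a correct proof skeleton with its central beams missing.
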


\section{Star chromatic index of Halin graphs}
In this section, we consider some special Halin graphs. 
\subsection{Star chromatic index of cubic Halin graphs}
\begin{theorem}\label{cubic}
If $G = T \cup C$ is a $3$-regular Halin graph, then $4 \leq \chi_{\mathrm{star}}'(G) \leq 6$. Furthermore, the upper bound is tight.
\end{theorem}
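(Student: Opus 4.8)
The plan is to establish three things separately: the lower bound $\chi_{\mathrm{star}}'(G)\ge 4$, the upper bound $\chi_{\mathrm{star}}'(G)\le 6$, and the existence of a cubic Halin graph attaining $6$.

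For the lower bound I would show that no cubic graph admits a star $3$-edge-coloring, so in particular no cubic Halin graph does. If $\chi'(G)\ge 4$ there is nothing to prove, so assume $\chi'(G)=3$. A proper $3$-edge-coloring of the cubic graph $G$ partitions $E(G)$ into three (edge-disjoint) perfect matchings, and the union of any two of them is a $2$-regular spanning subgraph, hence a disjoint union of even cycles, each of length at least $4$ since $G$ is simple. Every such cycle is bicolored, and a bicolored cycle of length at least $4$ always contains a bicolored path or cycle of length $4$. Thus three colors never suffice and $\chi_{\mathrm{star}}'(G)\ge 4$.

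For the upper bound I would induct on $|V(G)|$ using the recursive structure of cubic Halin graphs. Deleting the leaves of the characteristic tree $T$ leaves the subtree $T'$ of interior vertices; since every interior vertex has degree $3$, whenever $T'$ has at least two vertices a leaf $v$ of $T'$ has exactly one interior neighbor $u$ and exactly two leaf-neighbors $x,y$, which are consecutive on the adjoint cycle, say $\dots,a,x,y,b,\dots$. The reduction deletes $x,y$ and makes $v$ a leaf, replacing the cycle segment $a,x,y,b$ by $a,v,b$; the result $G'$ is again a cubic Halin graph with two fewer vertices. When $T'$ is a single vertex, $G=K_4$, the smallest cubic Halin graph, which is trivially star-$6$-colorable (indeed $\chi_{\mathrm{star}}'(K_4)=5$), and this is the only base case. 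The heart of the proof is the extension step: given a star $6$-edge-coloring $\phi'$ of $G'$, I would keep the color of $uv$, inherit $\phi(ax)=\phi'(av)$ and $\phi(yb)=\phi'(vb)$, and then choose colors for the three new edges $vx,vy,xy$, which together with $v$ form a triangle. The constraints are properness at $v,x,y$ and the absence of a bicolored $P_4$ or $C_4$ running through the triangle $vxy$, along the cycle near $a,x,y,b$ (the paths $\cdots$-$a$-$x$-$y$-$b$-$\cdots$ and the diagonal $a$-$x$-$v$-$y$-$b$), and into the tree through $uv$ (the paths $u$-$v$-$x$-$\cdots$ and $u$-$v$-$y$-$\cdots$). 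The hard part will be exactly this local case analysis: one must show that $6$ colors always leave an admissible choice of $(vx,vy,xy)$ regardless of what $\phi'$ assigns near $u,a,b$, re-coloring $ax$ or $yb$ among the free colors when a direct choice is blocked. Organizing the subcases by the colors $\phi'(uv),\phi'(av),\phi'(vb)$ and by the colors of the cycle edges incident to $a$ and $b$ is where the bound $6$ rather than $5$ is genuinely needed.

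Finally, for sharpness I would exhibit an explicit cubic Halin graph requiring $6$ colors, the natural candidate being the smallest necklace $\mathcal{N}_2$, which is the triangular prism (its characteristic tree is an edge $uv$ with two leaves at each end, and $\mathcal{N}_2$ reduces to $K_4$ in one step, which already signals that the $6$th color must appear in the extension). Here I would check by a finite case analysis that no star $5$-edge-coloring exists: the two triangular faces are each rainbow, and after normalizing their colors, an examination of the three ``vertical'' edges shows that every proper assignment forces either a bicolored square face or a bicolored path of length $4$ through a vertical edge. Hence $\chi_{\mathrm{star}}'(\mathcal{N}_2)\ge 6$, and combined with the upper bound this yields $\chi_{\mathrm{star}}'(\mathcal{N}_2)=6$, proving the bound tight.
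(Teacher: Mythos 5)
Your lower-bound and tightness parts are in order: the two-perfect-matchings argument correctly shows $\chi_{\mathrm{star}}'\geq 4$ for every simple cubic graph (this is the Dvo\v{r}\'{a}k--Mohar--\v{S}\'{a}mal fact that the paper merely cites), and the paper uses the same tight example $\mathcal{N}_2$ (the triangular prism), likewise settled by a finite check. The genuine gap is in the upper bound, and it sits exactly where you flag it: the extension step is asserted, not carried out, and the direct version of it can actually fail. Concretely, write $\varphi'(uv)=1$, $\varphi'(av)=2$, $\varphi'(vb)=3$, set $\varphi(ax)=2$, $\varphi(yb)=3$, and try to color the triangle $vx,vy,xy$. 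Properness gives $vx\notin\{1,2\}$ and $vy\notin\{1,3\}$. The path $twuvx$ (where $uw$ is another tree edge at $u$ and $wt$ is an edge at its far end) is bicolored precisely when $\varphi'(wt)=1$ and $vx=\varphi'(uw)$, and the path $rqaxv$ is bicolored precisely when $\varphi'(qr)=2$ and $vx=\varphi'(aq)$; symmetrically for $vy$ through $u$ and through $b$. It is consistent with the local constraints imposed by a star coloring $\varphi'$ that the two remaining edges at $u$ are colored $4,5$, those at $a$ are colored $5,6$, those at $b$ are colored $4,6$, and that all six are ``dangerous'' in the above sense. Then $vx$ is forced to equal $3$ and $vy$ is forced to equal $2$, and the path $axvyb$ becomes bicolored with colors $2,3,2,3$. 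So the recoloring of $ax$ or $yb$ that you mention in a single clause is not a fallback but the entire content of the proof, and you have not shown that it always succeeds; with it, the clean ``inherit $\varphi(ax)=\varphi'(av)$'' bookkeeping collapses and one must re-verify all paths through $a$ and $b$.

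This is also why the paper does not use your one-fan contraction down to $K_4$. It chooses the reduction via a longest path of $T$, which locates a leaf $u$ of the interior tree whose parent $w$ has as its third neighbor either another such leaf $v$ of the interior tree or a leaf of $T$ itself; it then deletes the whole configuration (six, respectively four, vertices) at once, re-attaches two edges at $w$, and, crucially, pre-assigns the three colors already present at $w$ to most of the new edges in a fixed pattern so that only four (respectively three) edges remain, each with at most five forbidden colors out of six. Its base cases are the necklaces $\mathcal{N}_1,\mathcal{N}_2,\mathcal{N}_3$ rather than $K_4$ alone. To complete your argument you would need either to adopt such a two-level reduction or to carry out in full the case analysis of when and how $ax$ and $yb$ can be recolored.
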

\begin{proof}
Note that every $3$-regular Halin graph contains a triangle. When $|C| = 3$, $G$ is the complete graph $K_{4}$ and $\chi_{\mathrm{star}}'(K_{4}) = 5$. When $|C| \geq 4$, $G$ contains a subgraph isomorphic to the net graph (see \autoref{tuH}). It is easy to see that $\chi_{\mathrm{star}}'(\mathrm{Net}) = 4$. Therefore, $\chi_{\mathrm{star}}'(G)$ has a lower bound four. On the other hand, Dvo{\v{r}}{\'a}k \etal proved that $\chi_{\mathrm{star}}'(H) \geq 4$ for any simple cubic graph $H$ in \cite[Theorem 5.1(b)]{MR3019390}. 

\begin{figure}
\centering
\begin{tikzpicture}
\coordinate (A) at (-2, 0);
\coordinate (B) at (-1, 0);
\coordinate (C) at (1, 0);
\coordinate (D) at (2, 0);
\coordinate (E) at (0, 1.4);
\coordinate (F) at (0, 2.4);
\draw (A)--(D);
\draw (E)--(B);
\draw (E)--(C);
\draw (E)--(F);
\fill (A) circle (2pt);
\fill (B) circle (2pt);
\fill (C) circle (2pt);
\fill (D) circle (2pt);
\fill (E) circle (2pt);
\fill (F) circle (2pt);
\end{tikzpicture}
\caption{Net graph}
\label{tuH}
\end{figure}
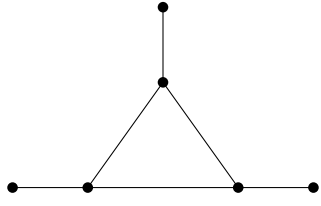

We prove the upper bound by induction on the length $n$ of the adjoint cycle $C$. For $n \in \{3, 4, 5\}$, it is easy to check that $\chi_{\mathrm{star}}'(\mathcal{N}_{1}) = 5$, $\chi_{\mathrm{star}}'(\mathcal{N}_{2}) = 6$, $\chi_{\mathrm{star}}'(\mathcal{N}_{3}) = 5$ (see \autoref{ne123}). Thus, the theorem is true for $n \leq 5$. Next, assume that $n \geq 6$ and the theorem is true for $|C| \leq n - 1$.
\begin{figure}
\centering
\subcaptionbox{Necklace $\mathcal{N}_{1}$\label{N1}}[0.3\linewidth]
{
\begin{tikzpicture}
\foreach \ang in {0, 1, 2}
{
\def\pointname{A\ang}
\coordinate (\pointname) at ($(\ang*360/3+90:1.3)$);
\node[circle, inner sep =1, fill, draw] () at (A\ang) {};}
\draw (A0)--node {\footnotesize 1} (A1)--node {\footnotesize 2}(A2)--node {\footnotesize 3}cycle;
\draw (0, 0)--node {\footnotesize 4}(A0);
\draw (0, 0)--node {\footnotesize 5}(A1);
\draw (0, 0)--node {\footnotesize 1}(A2);
\node[circle, inner sep =1, fill, draw] () at (0, 0) {};
\end{tikzpicture}
}
\subcaptionbox{Necklace $\mathcal{N}_{2}$\label{N2}}[0.3\linewidth]
{
\begin{tikzpicture}
\coordinate (A1) at (-2, 1);
\coordinate (B1) at (2, 1);
\coordinate (A2) at (-2, -1);
\coordinate (B2) at (2, -1);
\coordinate (A3) at (-1, 0);
\coordinate (B3) at (1, 0);
\draw (A1)--node {\footnotesize 5}(B1)--node {\footnotesize 4}(B2)--node {\footnotesize 6}(A2)--node {\footnotesize 4}cycle;
\draw (A1)--node {\footnotesize 1}(A3)--node {\footnotesize 3}(B3)--node {\footnotesize 1}(B1);
\draw (A2)--node {\footnotesize 2}(A3);
\draw (B2)--node {\footnotesize 2}(B3);
\foreach \x in {1, 2, 3}
{\node[circle, inner sep =1, fill, draw] () at (A\x) {};
\node[circle, inner sep =1, fill, draw] () at (B\x) {};}
\end{tikzpicture}
}
\subcaptionbox{Necklace $\mathcal{N}_{3}$\label{N3}}[0.3\linewidth]
{
\begin{tikzpicture}
\coordinate (A1) at (-2, -0.5);
\coordinate (A2) at (-1, 0);
\coordinate (A3) at (0, 0);
\coordinate (A4) at (1, 0);
\coordinate (A5) at (2, -0.5);
\coordinate (B1) at (-1, -1);
\coordinate (B2) at (0, -1);
\coordinate (B3) at (1, -1);
\draw (A1)--node {\footnotesize 5}(A2)--node {\footnotesize 2}(A3)--node {\footnotesize 3}(A4)--node {\footnotesize 4}(A5)--node {\footnotesize 2}(B3)--node {\footnotesize 5}(B2)--node {\footnotesize 4}(B1)--node {\footnotesize 3}cycle;
\draw (A2)--node {\footnotesize 1}(B1);
\draw (A3)--node {\footnotesize 1}(B2);
\draw (A4)--node {\footnotesize 1}(B3);
\draw (A1).. controls+(up:2cm) and + (up:2cm).. node {\footnotesize 1}(A5);
\foreach \x in {1, 2, 3}
\node[circle, inner sep =1, fill, draw] () at (B\x) {};
\foreach \x in {1,...,5}
\node[circle, inner sep =1, fill, draw] () at (A\x) {};
\end{tikzpicture}
}
\caption{}
\label{ne123}
\end{figure}

In the later inductive steps, we use two basic operations to reduce a $3$-regular Halin graph $G$ to another $3$-regular Halin graph $G'$ such that the length of the adjoint cycle of $G'$ is shorter than that of $G$. By the induction hypothesis, $G'$ has a star $6$-edge-coloring. This star $6$-edge-coloring naturally corresponds to a star $6$-edge-coloring of a subgraph $G^{*}$ of $G$. Then, we extend the star edge-coloring of $G^{*}$ to the whole graph $G$, which completes the proof. 

Let $P \coloneqq y_{0}y_{1}y_{2}\dots y_s$ be a longest path in $T$. Since $P$ is a diametral path of $T$, $s \geq 4$ and every neighbor of $y_1$ other than $y_2$ is a leaf. We rename the vertices so that $x = y_{3}$, $w = y_{2}$ and $u = y_{1}$, and let $u_{1}$ and $u_{2}$ be the neighbors of $u$ on $C$ (see \autoref{jubu}).

\begin{figure}
\centering
\begin{tikzpicture}
\coordinate (A0) at (0, 0);
\coordinate (A1) at (90:0.8);
\coordinate (A2) at (230:1.4);
\coordinate (A3) at (-50:1.4);
\coordinate (A4) at ($(A1) + (0, 0.8)$);
\coordinate (A5) at ($(A2) + (-1, 0)$);
\coordinate (A6) at ($(A3) + (1, 0)$);
\coordinate (A7) at ($(A5) + (-1, 0)$);
\coordinate (A8) at ($(A5) + (0, 1)$);
\coordinate (A9) at ($(A6) + (0, 1)$);
\coordinate (A10) at ($(A6) + (1, 0)$);
\coordinate (A11) at ($(A1) + (1, 0)$);

\draw (A0)node[right]{$u$}--(A1)node[left]{$w$}--(A4)node[right]{$x$};
\draw (A0)--(A2)node[below]{$u_{1}$};
\draw (A0)--(A3)node[below]{$u_{2}$};
\draw (A7)--(A5)node[below]{$z_{1}$}--(A6)node[below]{$v_{1}$}--(A10)node[below]{$v_{2}$};
\draw (A5)--(A8);
\draw (A6)--(A9)node[right]{$v$};
\draw (A1)--(A11);
\foreach \x in {0,...,11}
\node[circle, inner sep =1, fill, draw] () at (A\x) {};
\end{tikzpicture}
\caption{Local structure}
\label{jubu}
\end{figure}
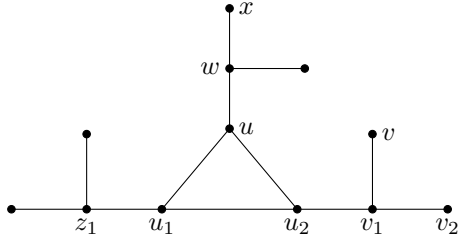

\begin{figure}
\centering
\begin{tikzpicture}
\coordinate (A0) at (0, 0);
\coordinate (A1) at (90:0.8);
\coordinate (A2) at (230:2.5);
\coordinate (A3) at (-50:2.5);
\coordinate (A4) at ($(A2) + (-1, 0)$);
\coordinate (A5) at ($(A3) + (1, 0)$);
\coordinate (A6) at ($(A4) + (-1, 0)$);
\coordinate (A7) at ($(A4) + (0, 1)$);
\coordinate (A8) at ($(A5) + (0, 1)$);
\coordinate (A9) at ($(A5) + (1, 0)$);

\coordinate (B1) at (230:1.6);
\coordinate (B2) at (-50:1.6);
\coordinate (B3) at ($(B1)+(-50:0.9)$);
\coordinate (B4) at ($(B2)+(230:0.9)$);
\draw (A0)node[right]{$w$}--(A1)node[right]{$x$};
\draw (A0)--(B1)node[right]{$u$}--(A2)node[below]{$u_1$};
\draw (A0)--(B2)node[left]{$v$}--(A3)node[below]{$v_{2}$};
\draw (A6)--(A4)node[below]{$z_{1}$}--(B3)node[below]{$u_{2}$}--(B4)node[below]{$v_{1}$}--(A5)node[below]{$z_{2}$}--(A9);
\draw (A4)--(A7);
\draw (A5)--(A8);
\draw (B1)--(B3);
\draw (B2)--(B4);

\foreach \x in {0,...,9}
\node[circle, inner sep =1, fill, draw] () at (A\x) {};
\foreach \x in {1,...,4}
\node[circle, inner sep =1, fill, draw] () at (B\x) {};
\end{tikzpicture}
\caption{$wv\in E(T)$}
\label{wm3}
\end{figure}
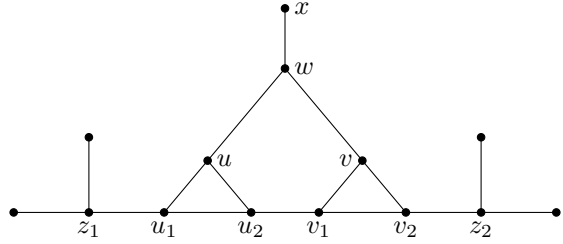

\begin{figure}
\centering
\begin{tikzpicture}
\coordinate (A0) at (0, 0);
\coordinate (A1) at (90:0.8);
\coordinate (A2) at (230:2.5);
\coordinate (A3) at (-50:2.5);
\coordinate (A4) at ($(A2) + (-1, 0)$);
\coordinate (A5) at ($(A3) + (1, 0)$);
\coordinate (A6) at ($(A4) + (-1, 0)$);
\coordinate (A7) at ($(A4) + (0, 1)$);
\coordinate (A8) at ($(A5) + (0, 1)$);
\coordinate (A9) at ($(A5) + (1, 0)$);

\coordinate (B1) at (230:1.6);
\coordinate (B3) at (-50:1.6);
\coordinate (B2) at ($(B1)+(-50:0.9)$);
\coordinate (B4) at ($(B2)+(230:0.9)$);

\draw (A0)node[right]{$w$}--(A1)node[right]{$x$};
\draw (A0)--(B1)node[right]{$u$}--(A2)node[below]{$u_{1}$};
\draw (A0)--(A3)node[below]{$v_{1}$};
\draw (A6)--(A4)node[below]{$z_{1}$}--(B2)node[below]{$u_{2}$}--(A5)node[below]{$v_{2}$}--(A9);
\draw (A4)--(A7);
\draw (A5)--(A8);
\draw (B1)--(B2);

\foreach \x in {0,...,9}
\node[circle, inner sep =1, fill, draw] () at (A\x) {};
\foreach \x in {1,2}
\node[circle, inner sep =1, fill, draw] () at (B\x) {};
\end{tikzpicture}
\caption{$w=v$}
\label{a3}
\end{figure}

Since $w$ has degree three, $w$ has the third neighbor $w_{1}$ other than $x$ and $u$. Without loss of generality, we may assume that $Q$ is the path from $w$ to $v_{1}$ in $T$. Since $P$ is a longest path in $T$, $Q$ has length at most two. It follows that $wv \in E(T)$ or $w = v$. The former implies that $vv_{2} \in E(T)$, and the latter means $wv_{1} \in E(T)$.

\begin{case}
$wv \in E(T)$.
\end{case}
In \autoref{wm3}, let $G'$ be the graph obtained from $G$ by deleting $u, u_{1}, u_{2}, v, v_{1}, v_{2}$, and adding two new edges $wz_{1}$ and $wz_{2}$. By the induction hypothesis, there exists a star $6$-edge-coloring $\varphi$ of $G'$ using colors from the set $\{1, 2, \dots, 6\}$. Without loss of generality, we assume that $\varphi(xw) = 1$, $\varphi(wz_{1}) = 2$, $\varphi(wz_{2}) = 3$. Except the edge $wz_{1}$, let the other two edges incident with $z_{1}$ be colored with $\alpha_{1}$ and $\alpha_{2}$. Except the edge $wz_{2}$, let the other two edges incident with $z_{2}$ be colored with $\beta_{1}$ and $\beta_{2}$.

For each edge $e$ in $E(G') \setminus \{wz_{1}, wz_{2}\}$, let $\psi(e) = \varphi(e)$. Additionly, let $\psi(u_{2}v_{1}) = \varphi(xw) = 1$, $\psi(wv) = \psi(v_{1}v_{2}) = \psi(z_{1}u_{1}) = \varphi(wz_{1}) = 2$, $\psi(wu) = \psi(u_{1}u_{2}) = \psi(v_{2}z_{2}) = \varphi(wz_{2}) = 3$. Until now, there exists no bicolored $4$-path.

In the end, we color these edges $vv_{2}, uu_{1}, uu_{2}, vv_{1}$ in order. For the edge $vv_{2}$, there are at most five forbidden colors $1$, $2$, $3$, $\beta_1$ and $\beta_2$, thus $vv_{2}$ can be colored with a color from $\{1, 2, 3, 4, 5, 6\} \setminus\{1, 2, 3, \beta_1, \beta_2\}$. For the edge $uu_{1}$, there are at most five forbidden colors $1, 2, 3, \alpha_{1}$ and $\alpha_{2}$, thus $uu_{1}$ can be colored with a color from $\{1, 2, 3, 4, 5, 6\} \setminus\{1, 2, 3, \alpha_1, \alpha_2\}$. For the edge $uu_2$, there are at most four forbidden colors $1, 2, 3$ and $\psi(uu_{1})$, thus $uu_{2}$ can be colored with a color from $\{1, 2, 3, 4, 5, 6\} \setminus\{1, 2, 3, \psi(uu_1)\}$. For the edge $vv_{1}$, there are at most five forbidden colors $1, 2, 3, \psi(vv_{2})$ and $\psi(uu_{2})$, thus $vv_{1}$ can be colored with a color from $\{1, 2, 3, 4, 5, 6\}\setminus\{1, 2, 3, \psi (vv_{2}), \psi(uu_2)\}$. It is easy to check that there exists no bicolored $4$-paths.
\begin{case}
$w = v$.
\end{case}
In \autoref{a3}, let $G'$ be the graph obtained from $G$ by deleting $u, u_1, u_2, v_{1}$, and adding two new edges $wz_{1}$ and $wv_{2}$. By the induction hypothesis, there exists a star $6$-edge-coloring $\varphi$ of $G'$ using colors from the set $\{1, 2, 3, 4, 5, 6\}$. Without loss of generality, we may assume that $\varphi(xw)=1$, $\varphi(wz_{1}) = 2$, $\varphi(wv_{2}) = 3$. Except the edge $wz_{1}$, let the other two edges incident with $z_{1}$ be colored with $\alpha_{1}$ and $\alpha_{2}$. Except the edge $wv_{2}$, let the other two edges incident with $v_{2}$ be colored with $\beta_{1}$ and $\beta_{2}$. Except the edge $wx$, let the other two edges incident with $x$ be colored with $\delta_{1}$ and $\delta_{2}$. 

For each edge $e$ in $E(G') \setminus \{wz_{1}, wv_{2}\}$, let $\psi(e) = \varphi(e)$. Additionally, let $\psi(u_{1}u_{2}) = \varphi(xw) = 1$, $\psi(wv_{1}) = \psi(z_{1}u_{1}) = \varphi(wz_{1}) = 2$, $\psi(uu_{1}) = \psi(v_{1}v_{2}) = \varphi(wv_{2}) = 3$. Until now, there is no bicolored $4$-path.

Finally, we color these edges $uw$, $u_{2}v_{1}$ and $uu_{2}$ in order. For the edge $uw$, there are at most five forbidden colors $1$, $2$, $3$, $\delta_{1}$ and $\delta_{2}$, thus $uw$ can be colored with a color from $\{1, 2, 3, 4, 5, 6\}\setminus\{1, 2, 3, \delta_{1}, \delta_{2}\}$. For the edge $u_{2}v_{1}$, we forbid five colors $1, 2, 3, \beta_{1}$ and $\beta_{2}$, thus $u_{2}v_{1}$ can be colored with a color from $\{1, 2, 3, 4, 5, 6\}\setminus\{1, 2, 3, \beta_{1}, \beta_{2}\}$. For the edge $uu_{2}$, there are at most five forbidden colors $1, 2, 3, \psi(u_{2}v_{1})$ and $\psi(uw)$, thus $uu_{2}$ can be colored with a color from $\{1, 2, 3, 4, 5, 6\}\setminus\{1, 2, 3, \psi(u_{2}v_{1}), \psi(uw)\}$. Obviously, there exists no bicolored $4$-paths. 
\resetcounter
\end{proof}

\subsection{Star chromatic index of necklace}
\begin{theorem}
Let $\mathcal{N}_{h}$ be a necklace with $h \geq 1$. If $h$ is odd, then $4 \leq \chi_{\mathrm{star}}'(\mathcal{N}_{h})\leq 5$. 
\end{theorem}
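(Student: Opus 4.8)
The lower bound is immediate: a necklace is a $3$-regular Halin graph, so \autoref{cubic} already gives $\chi_{\mathrm{star}}'(\mathcal{N}_h)\ge 4$ (equivalently, for $h\ge 3$ the net graph is a subgraph, and $\mathcal{N}_1=K_4$ with $\chi_{\mathrm{star}}'(K_4)=5$). The whole content is therefore the upper bound $\chi_{\mathrm{star}}'(\mathcal{N}_h)\le 5$, for which I would exhibit an explicit star $5$-edge-coloring exploiting the very regular structure of $\mathcal{N}_h$. Deleting the two cap vertices $0$ and $h+1$ leaves the ladder $P_2\square P_h$, with top rail $1,2,\dots,h$ (the spine), bottom rail $1',2',\dots,h'$, and rungs $g_i=ii'$; the five deleted edges $01,\,01',\,0(h+1),\,(h+1)h,\,(h+1)h'$ are exactly what closes the bottom rail into the adjoint cycle $C_{h+2}$ and attaches the two ends of the spine.

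By \autoref{PXP} the ladder alone satisfies $\chi_{\mathrm{star}}'(P_2\square P_h)=4$, so the plan is to color the rails and rungs with a periodic (period-two) star $4$-edge-coloring drawn from $\{1,2,3,4\}$, and to spend the fifth color only in the cap region $\{0,h+1\}$ to close everything up. (Equivalently, one can argue by induction, reducing $\mathcal{N}_h$ to $\mathcal{N}_{h-2}$ by removing one period-two block of the ladder and reconnecting the rails; this preserves the parity of $h$, with base cases $h\in\{1,3\}$ settled by $\chi_{\mathrm{star}}'(K_4)=5$ and the coloring of $\mathcal{N}_3$ in \autoref{ne123}.) Once the pattern is fixed, the verification splits into finitely many types of potential bichromatic $P_4$'s and $C_4$'s: those lying inside a single rail, those inside the outer cycle, those running around a square face $i,(i{+}1),(i{+}1)',i'$, and those turning at a rung (a path $i'\,i\,(i{+}1)\,(i{+}1)'\,(i{+}2)'$ and its mirror images). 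Each of these is ruled out by the local rules of the periodic pattern: adjacent rail edges differ; the two rail edges $e_i=i(i{+}1)$ and $f_i=i'(i{+}1)'$ of each square face differ; and the shifted pair $e_i,f_{i+1}$ differs.

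The delicate point—and the place where the hypothesis that $h$ is odd is indispensable—is the seam. The coloring propagated along the two rails must meet the five cap edges at $0$ and $h+1$ without creating a bichromatic four-path through a cap vertex, and the caps are the only vertices where the periodicity breaks. I expect this cap analysis to be the main obstacle, and I would handle it by choosing the phase of the period-two pattern so that the colors arriving at $\{1,1'\}$ and at $\{h,h'\}$ are compatible with a single valid assignment of the cap edges, then checking by hand the $O(1)$ four-paths that pass through $0$ or $h+1$. Parity enters precisely here: the ladder carries an odd number $h$ of rungs and the adjoint cycle $C_{h+2}$ has odd length, so the period-two pattern reaches the two caps in a phase that admits a consistent closure exactly in the odd case—which is why the statement is restricted to odd $h$.
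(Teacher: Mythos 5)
Your lower bound is fine (necklaces are cubic Halin graphs, so \autoref{cubic} applies), but the upper bound is where all the work lies, and there your proposal has a genuine gap: the one concrete structural commitment you make cannot work. A colouring of the ladder that is periodic with period two in the index $i$ forces each rail to carry only two colours in alternation, and a rail of $\mathcal{N}_h$ has $h-1\geq 4$ edges once $h\geq 5$, so it would contain a bichromatic path of length four entirely inside that rail. For the same reason the suggested induction from $\mathcal{N}_h$ to $\mathcal{N}_{h-2}$ by excising a ``period-two block'' has no colouring to propagate. Beyond that, the proposal never actually writes down a pattern, and it explicitly defers the seam analysis at the two cap vertices, which you yourself identify as ``the main obstacle''; but exhibiting the pattern and closing the seam \emph{is} the proof. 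The mod-$2$ parity heuristic you offer for why odd $h$ is needed also does not match what actually happens: the closure at the vertex $h+1$ depends on $h-1$ modulo $4$, not modulo $2$.

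For comparison, the paper's construction is not a $4$-colouring of the ladder plus a fifth colour at the caps. It uses all five colours on the ladder: every rung $\{i,i'\}$ (together with the edge $\{0,h+1\}$) receives one dedicated colour $a$, the spine is coloured with the period-four word $bcdf\,bcdf\cdots$, and the opposite rail with the shifted word $dfbc\,dfbc\cdots$; the shift guarantees that the two non-rung edges of any path of the form $i'\,i\,(i{+}1)\,(i{+}1)'\,(i{+}2)'$ get distinct colours, so no bichromatic $4$-path can use two rungs. The cap at $0$ is fixed once and for all ($f$ and $c$ on $\{0,1\}$ and $\{0,1'\}$), and the cap at $h+1$ is closed by a two-case analysis according to whether $h-1\equiv 0$ or $2\pmod 4$, with the base cases $h=1,3$ read off from \autoref{ne123}. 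If you want to salvage your approach, replace ``period two'' by a period-four pattern on the rails (or adopt the dedicated rung colour), and then actually carry out the finite check at the two caps in both residue classes of $h-1$ modulo $4$.
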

\begin{proof}
It is easy to see that there are at least two triangles in $\mathcal{N}_{h}$ when $h > 1$. So $\mathcal{N}_{h}$ contains a subgraph isomorphic to the Net graph. It follows that $\chi_{\mathrm{star}}'(\mathcal{N}_{h}) \geq \chi_{\mathrm{star}}'(\mathrm{Net}) \geq 4$. 

To prove that $\chi_{\mathrm{star}}'(\mathcal{N}_{h})\leq 5$ if $h$ is odd, it suffices to give a star $5$-edge-coloring for $\mathcal{N}_{h}$ with the coloring set $\{a, b, c, d, f\}$. By \autoref{cubic}, the $\mathcal{N}_{h}$ for $h=1$ and $3$,  satisfies the theorem.

\begin{figure}
\centering
\begin{tikzpicture}
\draw (-2, 0)--++(1, 0)--++(0, -1)--++(-1, 0);
\draw (-3, 0)--++(1, 0)--++(0, -1)--++(-1, 0);
\draw (-4, -0.5)--++(1, 0.5)--++(0, -1)--cycle;

\draw (2, 0)--++(-1, 0)--++(0, -1)--++(1, 0);
\draw (3, 0)--++(-1, 0)--++(0, -1)--++(1, 0);
\draw (4, -0.5)--++(-1, 0.5)--++(0, -1)--cycle;

\draw[dashed] (-1, 0)--(1, 0);
\draw[dashed] (-1, -1)--(1, -1);
\draw (-4, -0.5).. controls+(up:2cm) and +(up:2cm).. (4, -0.5);

\node[circle, inner sep =1, fill, draw] () at (-3, 0) {};
\node[circle, inner sep =1, fill, draw] () at (-2, 0) {};
\node[circle, inner sep =1, fill, draw] () at (-1, 0) {};
\node[circle, inner sep =1, fill, draw] () at (1, 0) {};
\node[circle, inner sep =1, fill, draw] () at (2, 0) {};
\node[circle, inner sep =1, fill, draw] () at (3, 0) {};
\node[circle, inner sep =1, fill, draw] () at (-3, -1) {};
\node[circle, inner sep =1, fill, draw] () at (-2, -1) {};
\node[circle, inner sep =1, fill, draw] () at (-1, -1) {};
\node[circle, inner sep =1, fill, draw] () at (1, -1) {};
\node[circle, inner sep =1, fill, draw] () at (2, -1) {};
\node[circle, inner sep =1, fill, draw] () at (3, -1) {};
\node[circle, inner sep =1, fill, draw] () at (-4, -0.5) {};
\node[circle, inner sep =1, fill, draw] () at (4, -0.5) {};
\end{tikzpicture}
\caption{necklace}
\label{neh}
\end{figure}
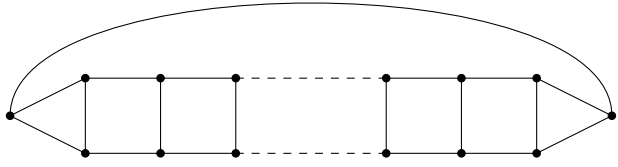

For $h \geq 5$ and $h$ is odd, we color the edge $\{0, 1\}$ by $f$, $\{0, 1'\}$ by $c$. For these edges $\{1, 1'\}, \{2, 2'\}$, $\{3, 3'\}, \dots, \{h, h'\}, \{0, h+1\}$, we  color them by $a$. Next, we color the spine of $T$. At last, we color the edges which are the opposite of the spine of $T$.

If $(h-1) \equiv 0 \pmod{4}$ and $h \geq 5$, we can use the colors in the following order $bcdfbcdf\dots bcdf$ to color the spine of $T$. And we color the edges which are the opposite of the spine of $T$ by $dfbcdfbc\dots dfbc$. At last, we color the edge $\{h, h+1\}$ by $b$ and color the edge $\{h', h+1\}$ by $d$.

If $(h-1) \equiv 2 \pmod{4}$ and $h \geq 5$, we can use the colors in the following order $bcdfbcdf\dots bcdf$ $bcdfbc$ to color the spine of $T$. And we color the edges which are the opposite of the spine of $T$ by $dfbcdfbc\dots dfbcdf$. At last, we color the edge $\{h, h+1\}$ by $d$ and color the edge $\{h', h+1\}$ by $b$.

Now, all the edges of $\mathcal{N}_{h}$ are colored. However, it remains to verify the above is indeed a star $5$-edge-coloring of $\mathcal{N}_{h}$. It is easy to see that the edges incident to the same vertex receive distinct colors, so we only need to show that there is no bichromatic $4$-path in $\mathcal{N}_{h}$. We can notice that the coloring of the edges of $C_{h+2}$ is proper and the coloring of the spine of $T$ is also proper. We also notice that some edges are colored by $a$, while those incident with them are colored by different colors. So, there is no bichromatic 4-path.

Hence, we get a star $5$-edge-coloring of $\mathcal{N}_{h}$ for $h$ is odd. That is if $h$ is odd, $4 \leq \chi'_{\mathrm{star}}(\mathcal{N}_{h}) \leq 5$.
\end{proof}

\subsection{Star chromatic index of complete Halin graphs}
In this section, we will prove the following result.
\begin{theorem}\label{zuidadu6}
Let $G = T \cup C \ (G \neq W_{n})$ be a complete Halin graph with maximum degree $\Delta(G)\geq 6$, then $\chi'_{\mathrm{star}}\leq \lfloor \frac{3\Delta}{2}\rfloor+1$.
\end{theorem}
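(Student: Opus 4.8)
The plan is to build a star edge-coloring of $G$ directly, in two phases, exploiting the layered structure of a complete Halin graph rather than the edge-partition bound of \autoref{partition}: partitioning $E(G)$ into $E(T)$ and $E(C)$ would only give $\lfloor 3\Delta/2\rfloor$ plus the restricted strong chromatic index of the cycle, which is far more than the $+1$ we want. Root $T$ at its center $r$ and group the vertices by distance from $r$. Since all leaves lie at a common depth $d$ and $G \neq W_n$, we have $d \geq 2$, so the leaves split into consecutive sibling blocks $L_v \coloneqq \{\,\ell : \ell \text{ is a leaf-child of } v\,\}$ as $v$ ranges over the depth-$(d-1)$ vertices, and the adjoint cycle $C$ visits these blocks $L_v$ in order, following the plane embedding. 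Every leaf has degree $3$ in $G$: one tree edge to its parent and two cycle edges. Set $N \coloneqq \lfloor \frac{3\Delta}{2}\rfloor + 1 \geq 10$ and fix a palette of $N$ colors, reserving one of them, say $N$, as a spare.

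In Phase~1 I color the tree edges $E(T)$ with the colors $\{1,\dots,\lfloor 3\Delta/2\rfloor\}$. Rather than merely invoking \autoref{tree} for existence, I build the coloring level by level from $r$ downward, so that it is not only a star edge-coloring of $T$ but also leaves enough freedom at the leaf edges. At each internal vertex $v$ the $d_G(v) \leq \Delta$ incident tree edges receive distinct colors; assigning the colors to the edges $\{\,v\ell : \ell \in L_v\,\}$ by a cyclic rotation that depends on the color of the edge from $v$ to its parent is what rules out every bichromatic $4$-path lying inside $T$, which is exactly the content of the $\lfloor 1.5\Delta\rfloor$ bound. The purpose of the rotation is to control, for each parent $v$, the set of colors appearing on the star at $v$, so that the subsequent cycle coloring can dodge them.

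In Phase~2 I traverse $C$ once and color its edges greedily. When coloring a cycle edge $\ell_i\ell_{i+1}$, the forbidden colors are the colors of the tree edges at $\ell_i$ and $\ell_{i+1}$, the color of the already-colored neighbor $\ell_{i-1}\ell_i$, and the colors that would complete a bichromatic $4$-path. Because each leaf has degree $3$, the adjacency constraints forbid at most four colors; the dangerous star constraints come from the length-$4$ paths that cross the tree, namely $\ell_{i-1}\ell_i,\, \ell_iv,\, v\ell_j,\, \ell_j\ell_{j+1}$ through a common parent $v$, together with the paths formed by two consecutive cycle edges and two tree edges. I account for all of these, show that the number of forbidden colors stays strictly below $N$ (here $\Delta \geq 6$, hence $N \geq 10$, provides the slack), and then pick an available color, resorting to the spare color $N$ only at the few positions where the block boundaries force it.

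The main obstacle is the Phase~2 verification that no bichromatic $4$-path crosses the tree–cycle boundary, in particular the paths of type (cycle, tree, tree, cycle) passing through a common parent $v$: since all edges $v\ell$ carry distinct tree colors, a careful pairing of the two cycle-edge colors at each leaf of $L_v$ against the rotation chosen in Phase~1 is needed to break every such alternation, and this must be maintained consistently across the junctions between consecutive blocks $L_v$ and $L_{v'}$ around the entire cycle. Handling the first and last blocks so that the cyclic closure of $C$ creates no new bichromatic path is the delicate endpoint of the argument; the excluded case $G = W_n$ and the hypothesis $\Delta \geq 6$ are precisely what keep this color budget feasible.
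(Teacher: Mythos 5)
Your high-level strategy coincides with the paper's: take a star $\lfloor 3\Delta/2\rfloor$-edge-coloring of $T$, add one reserved color, and extend greedily over the adjoint cycle, using the block structure of the leaves. But the proposal defers exactly the steps where the proof lives, and two of them do not go through as sketched. First, the ``cyclic rotation'' tree coloring is never defined, and nothing is established about which colors it places on the star at each parent; the paper shows this machinery is unnecessary. It takes an arbitrary coloring from \autoref{tree} and, when coloring a cycle edge inside a sibling block with common parent $w_i$, simply forbids the \emph{entire} set $C_\varphi(w_i)$ of tree colors at $w_i$. This over-count kills every (cycle, tree, tree, cycle) alternation through $w_i$ at once, so no coordination between the two phases is needed. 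Second, your forbidden-color count is asserted, not performed, and it is tighter than ``$N\geq 10$ provides the slack'' suggests: forbidding all of $C_\varphi(w_i)$ already costs up to $\Delta$ colors, and together with the reserved color, the two adjacent cycle edges, and a second-neighbor constraint, a naive left-to-right greedy can face $\Delta+4$ forbidden colors, leaving $\lfloor 3\Delta/2\rfloor+1-(\Delta+4)=\lfloor\Delta/2\rfloor-3$ choices, which is $0$ at $\Delta=6$. The paper keeps the count at $\Delta+3$ only by coloring the edges of each block in a specific non-consecutive order and, for blocks of length at least $4$, pre-coloring one interior edge with the reserved color.

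Third, the ``delicate endpoint'' you flag (the cyclic closure of $C$) is not resolved by using the spare color ``only at the few positions where the block boundaries force it.'' The paper's device is to color \emph{every} block-boundary edge $u_{i,j_i}u_{i+1,1}$ with the reserved color $c_0$ before touching any other cycle edge. This decouples the blocks completely: each block's internal path is then colored with both of its boundary contexts already fixed, so there is no wrap-around problem and no interaction between non-adjacent blocks. Without that (or an equivalent decoupling) your Phase~2 is genuinely circular and the last edge can fail. Finally, the structural fact you use implicitly --- that each internal vertex adjacent to leaves has exactly one internal neighbour, so the leaves really fall into sibling blocks of size $j_i\geq 2$ visited consecutively by $C$ --- is a lemma the paper states and proves; it should not be taken for granted.
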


\begin{lemma}
Let $G = T \cup C \ (G \neq W_{n})$ be a  complete Halin graph. Only one interior vertex adjacent to the leaf is interior vertex, and all other adjacent vertices are leaves.
\end{lemma}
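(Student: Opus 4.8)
The plan is to read the statement as the following structural claim: if $v$ is an interior vertex of the characteristic tree $T$ that is adjacent to some leaf, then exactly one neighbor of $v$ is again an interior vertex (namely its parent in $T$), while every remaining neighbor of $v$ is a leaf. I would first fix notation by rooting $T$ at its root vertex $r$ and recalling that, since $G$ is a \emph{complete} Halin graph, all leaves of $T$ lie at one common distance $d$ from $r$. The hypothesis $G \neq W_{n}$ excludes the star, so $T$ is not a star and hence $d \geq 2$; this is the only place the exception is used, and I would record it explicitly at the outset, since it is what guarantees that a vertex adjacent to a leaf actually has a parent to begin with.

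Next I would locate $v$ in the rooted tree. In $T$ a leaf has degree one, so the unique tree-edge at a leaf $\ell$ joins $\ell$ to its parent. Consequently an interior vertex $v$ adjacent to $\ell$ must be the parent of $\ell$; since $\ell$ sits at depth $d$, the vertex $v$ sits at depth $d-1$. This pins down the level of every interior vertex that touches a leaf, and from here the neighborhood of $v$ splits cleanly into its parent $p$ (at depth $d-2$) and its children (at depth $d$).

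It then remains to classify these neighbors. Each child of $v$ lies at the maximal depth $d$, so it can have no children of its own and is therefore a leaf; thus all children of $v$ are leaves. The parent $p$ lies at depth $d-2 \geq 0$ and has $v$ as a child, so $p$ is not a leaf and is interior. One small but genuine point to check is that the neighbors of $v$ in $G$ are exactly its neighbors in $T$: interior vertices of a Halin graph carry no edge of the adjoint cycle $C$ (the cycle runs through the leaves only), so passing from $T$ to $G$ adds no new neighbor at $v$. Combining these observations yields exactly one interior neighbor (namely $p$) and otherwise leaves, as claimed.

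I do not expect a serious obstacle here, since the result is essentially a direct unpacking of the definition of completeness; the points that require honest care are the use of $G \neq W_{n}$ to force $d \geq 2$ (and hence the existence of $p$), the boundary case $d = 2$ in which $p$ is the root $r$ and one must confirm that the root counts as an interior vertex, and the observation that $G$-adjacency at an interior vertex coincides with $T$-adjacency.
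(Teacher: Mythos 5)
Your proof is correct and rests on the same key fact as the paper's --- that in a complete Halin graph all leaves are equidistant from the root --- though you argue directly by classifying the neighbors of $v$ by depth, while the paper argues by contradiction, supposing a second interior neighbor and deriving a leaf strictly farther from the root. Your version is in fact more careful than the paper's: you explicitly invoke $G \neq W_{n}$ to guarantee that the parent exists, handle the boundary case where that parent is the root, and note that $T$-adjacency and $G$-adjacency coincide at interior vertices, none of which the paper addresses.
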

\begin{proof}
Suppose that $d_G(y)\geq4$, $N_G(y)=\{x, z,y_1$,$ \cdots, y_i\}$($i\geq2$) and the interior vertex $y$ is adjacent to at least two interior vertices $x$ and $z$. Let $P$ be a path from the root vertex to the interior vertex $y$, and the path $P$ passes the vertex $z$. Because the vertex $x$ is an interior vertex, there must exist other path from the vertex $x$ to other leaves. Now, we suppose that there is a path from the vertex $x$ to the leaf $w$. Obviously, the path from the root vertex to the leaf $w$ must pass the vertex $x$, $y$ and $z$. Besides, other  adjacent vertices of the vertex $y$ which are leaves must pass the vertex  $y$ and $z$. Thus, the distance from the root vertex to the leaf $w$ is longer than that from the root vertex to other leaves which are adjacent to vertex $y$, a contradiction.
\end{proof}
\begin{proof}[Proof of \autoref{zuidadu6}]
Let $G$ be a graph that satisfies the above conditions. Without loss of generality, we assume that the vertices on the adjacent cycle $C$ counter clockwise, followed by $u_{1, 1}, u_{1, 2}, \cdots, u_{1, j_1}$; $u_{2, 1}, u_{2, 2}, \cdots, u_{2, j_2}; \cdots$; $u_{n, 1}, u_{n, 2}, \cdots, u_{n, j_n}$, and each of these vertices $u_{i, 1}, u_{i, 2}, \cdots, u_{i, j_i}$ is adjacent to the internal vertex $w_i$($1\leq i \leq n$, $j_i\geq 2$).

From \autoref{tree},  we know that the tree of $T$ has a star $ \lfloor \frac {3\Delta}{2}\rfloor$-edge-coloring with colors from set $C_1=\{1, 2, \cdots, \lfloor \frac {3\Delta}{2}\rfloor\}$. Now, to get a star edge-coloring $\psi$ of $G=T\cup C$ using at most $\lfloor \frac{3\Delta}{2}\rfloor+1$ colors from set $C_2=\{1, 2, \cdots, \lfloor \frac {3\Delta}{2}\rfloor, c_0\}$, we consider the coloring of the edges of the adjacent cycle $C$. First, we color the edges $u_{1, j_1}u_{2, 1}, u_{2, j_2}u_{3, 1}, \cdots, u_{n-1, j_{n-1}}u_{n, 1}, u_{n, j_n}u_{1, 1}$ with  color $c_0$.

Let $P_i$ ($1\leq i \leq n$) be the path constituted by the edges $u_{i, 1} u_{i, 2}, u_{i, 2}u_{i, 3}, \cdots, u_{i, j_i-1}u_{i, j_i}$. Next, we color the edges of the path $P_i$.

According to the degree of the internal vertex $w_i$, we consider the following four cases.
\begin{case}
$d_G(w_i)=3$ ($1\leq i \leq n$).
\end{case}
In this case, $P_i$ is a path of length $1$. Now, we just need to color the edge $u_{i, 1}u_{i, 2}$, the set of forbidden colors  for the edge $u_{i, 1}u_{i, 2}$ is $C_\varphi(w_i)\cup \{c_0, \varphi(w_{i-1}u_{i-1, j_{i-1}})$, $ \varphi(w_{i+1}u_{i+1,1})$, $\psi(u_{i-1, j_{i-1}-1}u_{i-1}j_{i-1})$, $\psi(u_{i+1, 1}u_{i+1, 2})\}$  and its size is at most $8$.  Hence, the number of colors available for the edge $u_{i, 1}u_{i, 2}$ is at least $\lfloor \frac{3\Delta}{2}\rfloor+1-8 \geq 2$, thus, the edge $u_{i, 1}u_{i, 2}$ can be colored.
\begin{case}
$d_G(w_i)=4$ ($1\leq i \leq n$).
\end{case}
In this case, $P_i$ is a path of length $2$. Now, we first color the edge $u_{i, 1}u_{i, 2}$, the set of forbidden colors  for the edge $u_{i, 1}u_{i, 2}$ is $C_\varphi(w_i)\cup \{c_0, $ $\varphi(w_{i-1}u_{i-1, j_{i-1}})$, $\psi(u_{i-1, j_{i-1}-1}$ $u_{i-1, j_{i-1}})\}$ and its size is at most $7$. Hence, the number of colors available for the edge $u_{i, 1}u_{i, 2}$ is at least $\lfloor \frac{3\Delta}{2}\rfloor+1-7 \geq 3$, thus, the edge $u_{i, 1}u_{i, 2}$ can be colored.

Next, we color the edge $u_{i, 2}u_{i, 3}$, the set of forbidden colors  for the edge $u_{i, 2}u_{i, 3}$ is $C_\varphi(w_i)\cup \{c_0, \varphi(w_{i+1}u_{i+1, 1})$, $\psi(u_{i, 1}u_{i, 2}), \psi(u_{i+1, 1}u_{i+1, 2})\}$ and its size is at most $8$. Hence, the number of colors available for the edge $u_{i, 2}u_{i, 3}$ is at least $\lfloor \frac{3\Delta}{2}\rfloor+1-8 \geq 2$, thus, the edge $u_{i, 2}u_{i, 3}$ can be colored.
\begin{case}
$d_G(w_i)=5$ ($1\leq i \leq n$).
\end{case}
In this case, $P_i$ is a path of length $3$. Now, we first color the edge $u_{i, 1}u_{i, 2}$, the set of forbidden colors  for the edge $u_{i, 1}u_{i, 2}$ is $C_\varphi(w_i)\cup \{c_0, $ $\varphi(w_{i-1}u_{i-1, j_{i-1}})$, $\psi(u_{i-1, j_{i-1}-1}$ $u_{i-1, j_{i-1}})\}$ and its size is at most $8$. Hence, the number of colors available for the edge $u_{i, 1}u_{i, 2}$ is at least $\lfloor \frac{3\Delta}{2}\rfloor+1-8 \geq 2$, thus, the edge $u_{i, 1}u_{i, 2}$ can be colored.

Next, we color the edge $u_{i, 3}u_{i, 4}$, the set of forbidden colors  for the edge $u_{i, 3}u_{i, 4}$ is $C_\varphi(w_i)\cup \{c_0, \varphi(w_{i+1}u_{i+1, 1})$, $\psi(u_{i, 1}u_{i, 2}), \psi(u_{i+1, 1}u_{i+1, 2})\}$ and its size is at most $9$.  Hence, the number of colors available for the edge $u_{i, 3}u_{i, 4}$ is  at least $\lfloor \frac{3\Delta}{2}\rfloor+1-9 \geq 1$, thus, the edge $u_{i, 3}u_{i, 4}$ can be colored.

Finally, we color the edge $u_{i, 2}u_{i, 3}$, the set of forbidden colors  for the edge  $u_{i, 2}u_{i, 3}$ is $C_\varphi(w_i)\cup \{c_0$, $\psi(u_{i, 1}u_{i, 2}), \psi(u_{i, 3}u_{i, 4})\}$ and its size is at most $8$.  Hence, the number of colors available for the edge $u_{i, 2}u_{i, 3}$ is at least $\lfloor \frac{3\Delta}{2}\rfloor+1-8 \geq 2$, thus, the edge $u_{i, 2}u_{i, 3}$ can be colored.
\begin{case}
$d_G(w_i)\geq 6$ ($1\leq i \leq n$).
\end{case}
In this case, $P_i$ is a path of length at least $4$.
Now,  we first color the edge $u_{i, j_i-3}u_{i,j_i-2}$ with color $c_0$.

Then, we color the edge $u_{i, 1}u_{i, 2}$, the set of forbidden colors  for the edge $u_{i, 1}u_{i, 2}$ is  $C_\varphi(w_i)\cup \{c_0, $ $\varphi(w_{i-1}u_{i-1, j_{i-1}})$, $\psi(u_{i-1, j_{i-1}-1}$ $u_{i-1, j_{i-1}})\}$ and its size is at most $\Delta+3$. Hence, the number of colors available for the edge $u_{i, 1}u_{i, 2}$ is at least $\lfloor \frac{3\Delta}{2}\rfloor+1-\Delta-3= \lfloor \frac{\Delta}{2}\rfloor-2\geq 1$, thus, the edge $u_{i, 1}u_{i, 2}$ can be colored.

Next, we color the edge $u_{i, j_i-1}u_{i, j_i}$, the set of forbidden colors  for the edge $u_{i, j_i-1}u_{i, j_i}$ is $C_\varphi(w_i)\cup \{c_0, \varphi(w_{i+1}$ $u_{i+1, 1})$, $\psi(u_{i+1, 1}u_{i+1, 2})\}$ and its size is at most $\Delta+3$. Hence, the number of colors available for the edge $u_{i, j_i-1}u_{i, j_i}$ is at least $\lfloor \frac{3\Delta}{2}\rfloor+1-\Delta-3= \lfloor \frac{\Delta}{2}\rfloor-2\geq 1$, thus, the edge $u_{i, j_i-1}u_{i, j_i}$ can be colored.

After that, we color the edge $u_{i, 2}u_{i, 3}$, the set of forbidden colors  for the edge $u_{i, 2}u_{i, 3}$ is  $C_\varphi(w_i)\cup \{c_0$, $\psi(u_{i, 1}u_{i, 2})\}$ and its size is at most $\Delta+2$.  Hence, the number of colors available for the edge $u_{i, 2}u_{i, 3}$ is at least $\lfloor \frac{3\Delta}{2}\rfloor+1-\Delta-2=\lfloor \frac{\Delta}{2}\rfloor-1 \geq 2$, thus, the edge $u_{i, 2}u_{i, 3}$ can be colored.

Finally, we color the edges $u_{i,m}u_{i,m+1}$($3 \leq m \leq j_i-2$) and the edge $u_{i,j_i-2}u_{i, j_i-1}$ in counterclockwise order. The set of forbidden colors  for every edge is at most $\Delta+3$, and the number of colors available for every edge is at least $\lfloor \frac{3\Delta}{2}\rfloor+1-\Delta-3=\lfloor \frac{\Delta}{2}\rfloor-2 \geq 1$. Thus, these edges all can be colored. Clearly, there is no bi-chromatic $4$-paths.

From what has been discussed above, the edges on the path $P_i$ all are colored, besides, there is no bicolored $4$-path. Thus, we get a star edge-coloring $\psi$ of graph $G$ using at most $\lfloor \frac{3\Delta}{2}\rfloor+1$ colors.
\resetcounter
\end{proof}
\section{Star chromatic index of square of paths and cycles}
\subsection{Star chromatic index of $P_n^2$}
\begin{lemma}
Let $F_{3}$ be a simple graph as in \autoref{H1}. Then $\chi'_{\mathrm{star}}(F_{3})\geq6$.
\end{lemma}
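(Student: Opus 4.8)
The plan is to argue by contradiction: suppose $F_{3}$ admits a star $5$-edge-coloring $\phi$ with colors from $\{1,2,3,4,5\}$, and derive an impossibility from the dense local structure of $F_{3}$. The feature I would exploit is that, being (a subgraph of) the square of a path, $F_{3}$ is built out of overlapping triangles $\{v_{i},v_{i+1},v_{i+2}\}$ and $4$-cycles $v_{i}v_{i+1}v_{i+3}v_{i+2}v_{i}$. In any proper coloring the three edges of a triangle receive three distinct colors, while the star condition forbids any four consecutive edges from being colored in the alternating pattern $abab$ and likewise forbids any $4$-cycle from being colored $abab$. These are very rigid local constraints, and the whole strategy is to show that they cannot simultaneously be satisfied with only five colors.

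First I would fix a convenient starting configuration using the symmetry of $F_{3}$ (the reversal $v_{i}\mapsto v_{n+1-i}$, together with the freedom to rename colors). Concretely, I would focus on a vertex $v$ of degree $4$ together with the triangles hanging off it, assign the four edges at $v$ the colors $1,2,3,4$, and then read off the colors that are forced on the two ``diagonal'' edges completing the adjacent triangles. This pins down a sizeable block of the coloring with no loss of generality and gives a firm base from which to propagate.

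Next I would propagate outward one vertex at a time. Each newly colored edge $e$ must avoid (i) the colors of all edges sharing an endpoint with $e$ (properness) and (ii) any color that would complete a bichromatic path of four edges or a bichromatic $4$-cycle through $e$. I would keep a running tally of the forbidden set at each edge; because the triangle/$4$-cycle structure is so dense, at several edges the forbidden set already reaches size $\geq 5$, or the only surviving choice immediately produces an $abab$ pattern with previously colored edges. Carrying this through all of $F_{3}$ should expose an edge for which no color in $\{1,2,3,4,5\}$ is admissible, contradicting the assumed $5$-coloring and hence giving $\chi'_{\mathrm{star}}(F_{3})\geq 6$.

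The hard part will be controlling the branching: at a few edges two or three colors may remain, so the argument is really a finite case analysis whose branch count can explode. I would keep it in check by (a) using the reversal symmetry to halve the cases, (b) always coloring the most constrained bottleneck edge next so that branches die quickly, and (c) recording, for each surviving partial coloring, one explicit $4$-edge path or $4$-cycle that is forced into the pattern $abab$. The delicate point is to certify that \emph{every} branch is genuinely killed and that no clever reuse of the five colors slips past the constraints, so the bookkeeping of which $abab$ obstruction eliminates each case is where the real work lies.
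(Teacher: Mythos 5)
Your plan matches the paper's proof: after normalizing the four spoke colors at the degree-$4$ hub (the paper sets $\phi(v_1v_3)=a$, $\phi(v_2v_3)=b$, $\phi(v_3v_4)=c$, $\phi(v_3v_5)=d$), the paper cases on the single remaining diagonal edge $v_2v_4$, whose admissible colors are only $\{a,d,e\}$, and shows that the two outer path edges $v_1v_2$ and $v_4v_5$ then cannot both be colored without creating a bichromatic $4$-path or $4$-cycle. The branching you worry about is in fact tiny --- three cases, each dying within two further forced edges --- so your approach is sound and essentially identical to the paper's.
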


\begin{figure}
\centering
\begin{tikzpicture}
\coordinate (A) at (0,0);
\coordinate (B) at (2,0);
\coordinate (C) at (0,-1);
\coordinate (D) at (2, -1);
\coordinate (E) at (0, -2);
\draw[line join=round] (A)node[left]{\small$v_{1}$}--(B)node[right]{\small$v_{2}$}--(C)node[left]{\small$v_{3}$}--(D)node[right]{\small$v_{4}$}--(E)node[left]{\small$v_{5}$}--cycle;
\draw[line cap=round] (B)--(D);
\node[circle, inner sep =1, fill, draw] () at (A) {};
\node[circle, inner sep =1, fill, draw] () at (B) {};
\node[circle, inner sep =1, fill, draw] () at (C) {};
\node[circle, inner sep =1, fill, draw] () at (D) {};
\node[circle, inner sep =1, fill, draw] () at (E) {};
\end{tikzpicture}
\caption{Fan $F_{3}$}
\label{H1}
\end{figure}

\begin{proof}
Suppose that $\phi$ is a star $5$-edge-coloring of $F_{3}$ which uses coloring set $\{a, b, c, d, e\}$.Without loss of generalization, assume that $\phi(v_ 1v_3)=a, \phi(v_2v_3)=b, \phi(v_3v_4)=c, \phi(v_3v_5)=d$. The set of available colors for $v_2v_4$ is $\{a, d, e\}$. We will consider the following three cases according to the color of $v_{2}v_{4}$.
\begin{case}
$\phi(v_2v_4)=a$.
\end{case}
First, we color the edge $v_{1}v_{2}$. To avoiding bichromatic $4$-paths and $4$-cycles, $v_{1}v_{2}$ must be colored with $e$. But whatever $v_{4}v_{5}$ is colored, the coloring cannot be a proper edge-coloring, or there exists a bichromatic $4$-path or $4$-cycle. 
\begin{case}
$\phi(v_2v_4)=d$.
\end{case}
Similarly, the edge $v_{1}v_{2}$ must be colored with $e$. So $v_{4}v_{5}$ cannot be properly colored, and we cannot avoid exist a bichromatic $4$-path or $4$-cycle. 
\begin{case}
$\phi(v_2v_4)=e$.
\end{case}
Whether $v_{1}v_{2}$ is colored with $c$ or $d$, $v_{4}v_{5}$ cannot be colored as desired. 
\resetcounter
\end{proof}
\begin{theorem}
For the graph $P_n^2$ with $n \geq 3$,
$$\chi'_{\mathrm{star}}(P_{n}^{2})=
\begin{cases}
3, \hspace*{2mm}n = 3;\\
4, \hspace*{2mm}n = 4;\\
6, \hspace*{2mm}n \geq 5.
\end{cases}$$
\end{theorem}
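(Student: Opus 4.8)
The plan is to handle the three ranges $n=3$, $n=4$, and $n\ge5$ separately, matching a lower bound with an explicit coloring in each.

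For the lower bounds, note first that $P_3^2=K_3$ contains no path or cycle of length four, so every proper edge-coloring is already a star edge-coloring and $\chi'_{\mathrm{star}}(P_3^2)=\chi'(K_3)=3$. Next, $P_4^2$ is the diamond $K_4-v_1v_4$, whose only $4$-cycle is $v_1v_2v_4v_3$ (with chord $v_2v_3$). In any proper $3$-edge-coloring the degree-$3$ vertices $v_2$ and $v_3$ each see all three colors, so once $\phi(v_2v_3)$ is fixed the remaining four edges---which are exactly the edges of that $4$-cycle---can only use the two other colors; hence the cycle is bichromatic and $\chi'_{\mathrm{star}}(P_4^2)\ge4$. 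Finally, for $n\ge5$ the five consecutive vertices $v_{i-2},\dots,v_{i+2}$ induce a copy of the fan $F_3$ of \autoref{H1}, with apex $v_i$ and spine-path $v_{i-2}v_{i-1}v_{i+1}v_{i+2}$ (indeed $P_5^2\cong F_3$). Since the star chromatic index does not increase when passing to a subgraph, the preceding lemma gives $\chi'_{\mathrm{star}}(P_n^2)\ge\chi'_{\mathrm{star}}(F_3)\ge6$.

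For the upper bounds the small cases are immediate: three colors suffice on the triangle, and the diamond admits the star $4$-edge-coloring $\phi(v_1v_2,v_2v_3,v_2v_4,v_1v_3,v_3v_4)=(1,2,3,4,1)$, whose unique $4$-cycle is tricolored. The heart of the theorem is the case $n\ge5$, where $\Delta(P_n^2)=4$ and I must exhibit a star $6$-edge-coloring. I would define a coloring that is periodic in the index $i$, specifying once and for all a repeating block of colors for the path edges $v_iv_{i+1}$ and for the square edges $v_iv_{i+2}$, chosen so that the (at most four) edges at each vertex $v_i$, namely $v_{i-1}v_i$, $v_iv_{i+1}$, $v_{i-2}v_i$, $v_iv_{i+2}$, receive distinct colors. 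To verify star-ness I would use that every relevant configuration is local: a $4$-cycle occurs only on four consecutive vertices (its edge set being $\{v_iv_{i+1},\,v_iv_{i+2},\,v_{i+1}v_{i+3},\,v_{i+2}v_{i+3}\}$), and a bichromatic $4$-path, written as a walk $x_0x_1x_2x_3x_4$ with steps in $\{\pm1,\pm2\}$, is determined by its step pattern together with $x_0\bmod 6$. Thus it suffices to run through the finitely many step patterns of length four over one period, together with the consecutive $4$-cycles, checking in each case that at least three colors appear.

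The main obstacle is pinning down a block that passes all of these local checks. A naive separation of the palette (three colors on path edges, three on square edges) cannot work: the mixed path/square $4$-paths such as $v_av_{a+1}v_{a+3}v_{a+4}v_{a+2}$ force one color class to realize four consecutive distinct values, which is impossible with three colors. Even the simplest ``shift'' colorings---$v_iv_{i+1}\mapsto i$ and $v_iv_{i+2}\mapsto i+t\pmod6$---fail, being defeated precisely by the $P$-$S$-$S$-$P$ paths $v_xv_{x+1}v_{x+3}v_{x+5}v_{x+4}$ and their mirror images, which come out bichromatic for every admissible $t$. Consequently the block must genuinely interleave the two edge types, and the crux of the argument is to produce such a block and carry out the finite but delicate verification that no bichromatic $4$-path or $4$-cycle survives.
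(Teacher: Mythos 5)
Your lower bounds are complete and essentially identical to the paper's: $P_3^2=K_3$, the diamond argument for $n=4$, and the embedding of the fan $F_3$ (indeed $P_5^2\cong F_3$) for $n\ge5$ together with the lemma $\chi'_{\mathrm{star}}(F_3)\ge6$. The problem is the upper bound for $n\ge5$, which is where the actual content of the theorem lies. You propose to exhibit a periodic star $6$-edge-coloring and verify star-ness by a finite local check over one period, but you never produce the coloring: you show that two natural candidate families fail and then state that ``the crux of the argument is to produce such a block.'' That is precisely the step that cannot be waved at --- without an explicit block (or at least a proof that one exists), the inequality $\chi'_{\mathrm{star}}(P_n^2)\le6$ is not established, and the theorem reduces to $\chi'_{\mathrm{star}}(P_n^2)\ge6$. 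As written, this is a genuine gap, not a routine verification left to the reader, since your own discussion demonstrates that plausible candidates are defeated by mixed $4$-paths.

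For comparison, the paper avoids constructing any global coloring by invoking the edge-partition bound of \autoref{partition}: delete the alternate path edges $H=\{v_iv_{i+1}: i \text{ even}\}$, observe that the remainder $F$ is a subgraph of a ladder $P_2\square P_m$ (the two square-paths $v_1v_3v_5\cdots$ and $v_2v_4v_6\cdots$ joined by the rungs $v_1v_2,v_3v_4,\dots$), so $\chi'_{\mathrm{star}}(F)\le4$ by \autoref{PXP}, while $H$ restricted to $P_n^2$ has strong chromatic index $2$ (edges of $H$ at distance more than one in $P_n^2$ may share a color, so two alternating colors suffice). This yields $4+2=6$ with no case analysis. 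If you wish to salvage your direct approach, you must either supply a concrete period-$6$ (or longer) block and carry out the finite check, or switch to a decomposition argument of this kind.
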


\begin{proof}
It is easy to check that $\chi_{\mathrm{star}}'(P_3^2)=\chi_{\mathrm{star}}'(C_3)=3$ and $\chi_{\mathrm{star}}'(P_4^2)=4$.

For $n\geq5$, $P_n^2$ contains a subgraph isomorphic to the graph $F_{3}$ (see \autoref{H1}). It is easy to see that $\chi_{\mathrm{star}}'(P_n^2)\geq \chi_{\mathrm{star}}'(F_{3})\geq6$. Next, we want to prove $\chi_{\mathrm{star}}'(P_n^2)\leq 6$, we may assume that $P_n^2$ can be edge-partitioned into two graphs $F$ and $H$. Let $F = P_n^2 - \{v_iv_{i+1} : 2 \leq i \leq n-1 \mbox{and $i$ is even}\}$, $H = \{v_iv_{i+1} : 2 \leq i \leq n-1 \mbox{and $i$ is even}\}$. By \autoref{partition}, we derive that $$\chi_{\mathrm{star}}'(P_n^2)\leq \chi_{\mathrm{star}}'(F)+\chi_{s}'(H|_{P_n^2}).$$

We know that the graph $F$ is a subgraph of $P_2\Box P_m$ for some $m \geq 3$. By \autoref{PXP}, $\chi_{\mathrm{star}}'(P_2\Box P_m)=4$. Thus, we get that  $\chi_{\mathrm{star}}'(F)\leq \chi_{\mathrm{star}}'(P_2\Box P_m)=4$. Besides, $\chi_{s}'(H|_{P_n^2})=2$ is obvious. Therefore, $\chi_{\mathrm{star}}'(P_n^2)\leq \chi_{\mathrm{star}}'(F)+\chi_{s}'(H|_{P_n^2})=4+2=6$.

In summary, $\chi_{\mathrm{star}}'(P_n^2)=6$ for $n \geq 5$.
\end{proof}

\subsection{Star chromatic index of $C_n^2$}
In order to study the star chromatic index of cyclic square graphs, we first consider it as the union of a Hamiltonian cycle and an inner cycle, then dye the star edge chromatic number of cyclic square graphs in turn.
\begin{theorem}
For any cyclic square graphs $C_n^2$, if $n$ is even, then $\chi_{\mathrm{star}}'(C_n^2) \leq9$; if $n$ is odd ($n\neq5$ or $11$), then $\chi_{\mathrm{star}}'(C_n^2) \leq 8$.
\end{theorem}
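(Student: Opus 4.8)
My plan follows the decomposition announced at the start of this subsection. Label the vertices $v_0,\dots,v_{n-1}$ and split $E(C_n^2)=E(C_n(1,2))$ into the \emph{outer} edges $a_i=v_iv_{i+1}$, which form the Hamiltonian cycle, and the \emph{chords} $b_i=v_iv_{i+2}$ (all indices modulo $n$). The chords form a single $n$-cycle $v_0v_2\cdots v_{n-1}v_1v_3\cdots v_{n-2}v_0$ when $n$ is odd, and two vertex-disjoint $\tfrac n2$-cycles (on the even- and on the odd-indexed vertices) when $n$ is even. I would then feed this edge-partition into \autoref{partition}, taking $F$ to be the chord graph and $H$ the outer cycle, which gives $\chi_{\mathrm{star}}'(C_n^2)\le \chi_{\mathrm{star}}'(F)+\chi_s'(H|_{C_n^2})$.

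Bounding the first term is routine: every component of $F$ is a cycle, and it is well known (and easily checked via the pattern $1,2,3,1,2,3,\dots$ with a short seam repair) that $\chi_{\mathrm{star}}'(C_m)\le 3$ for all $m\neq 5$; hence $\chi_{\mathrm{star}}'(F)\le 3$ unless some component is a $C_5$, which occurs only for $n\in\{5,10\}$. For the second term I would identify the conflict graph of $H$ inside $G$. Two outer edges $a_i,a_j$ lie at distance at most two in $C_n^2$ exactly when $|i-j|\le 3 \pmod n$: a shared vertex accounts for $|i-j|\le 1$, the chord $b_{i+1}=v_{i+1}v_{i+3}$ joins $a_i$ to $a_{i+3}$, and one checks that $|i-j|=4$ already forces distance three. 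Consequently the conflict graph is the circulant $C_n(1,2,3)=C_n^3$, so that $\chi_s'(H|_{C_n^2})=\chi(C_n^3)$.

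It then remains to control $\chi(C_n^3)$. Its independence number is $\lfloor n/4\rfloor$ and it is vertex-transitive, so $\chi(C_n^3)=\lceil n/\lfloor n/4\rfloor\rceil$, which equals $4$ when $4\mid n$ and otherwise equals $5$ for all $n\ge 12$; the only order $\ge 8$ at which it spikes to $6$ is $n=11$, while for $n\le 7$ the cube is (almost) complete. Combining the two terms gives $\chi_{\mathrm{star}}'(C_n^2)\le \chi_{\mathrm{star}}'(F)+\chi(C_n^3)$. For odd $n$ the chord graph is a single cycle, so $\chi_{\mathrm{star}}'(F)\le 3$ (as $n\neq 5$), and since $\chi(C_n^3)\le 5$ for every odd $n\ge 9$ with $n\neq 11$, we obtain the stated bound $\le 8$; at $n=11$ the cube forces $\chi(C_{11}^3)=6$ and the estimate only reaches $9$, which is why that order is excluded. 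For even $n$ we have $\chi_{\mathrm{star}}'(F)\le 3$ except at $n=10$ (two $C_5$'s, needing a fourth colour) together with $\chi(C_n^3)\le 5$, so the total is at most $9$ throughout, and in fact at most $8$ whenever $n\neq 10$.

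I expect the main obstacle to be twofold. First, the value of $\chi(C_n^3)$ is genuinely number-theoretic, depending on $n\bmod 4$, so realizing the bound requires a periodic $4$- or $5$-colouring of the outer cycle whose wrap-around seam repeats no colour within index-distance three; verifying this for every residue class is the technical heart of the argument. Second, the small orders $n\le 11$ lie outside the range in which the partition estimate is efficient (for instance $C_7^3$ is complete), so I would dispatch these directly: an explicit star colouring establishes the claimed bound for each admissible small order, whereas for $n=5$ (where $C_5^2=K_5$) and for $n=11$ (where $\chi(C_{11}^3)=6$) the method only yields $9$ rather than $8$. These two residual orders are therefore exactly the ones excluded from the odd-case bound, matching the statement.
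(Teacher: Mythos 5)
Your plan is, for odd $n$, exactly the paper's argument: partition $C_n^2$ into the Hamiltonian cycle and the chord cycle, star-colour the chords with $3$ colours, strong-colour the Hamiltonian cycle restricted to $C_n^2$ with $5$, and add via \autoref{partition}. Your identification of the conflict graph of the outer edges as $C_n(1,2,3)=C_n^3$ is correct and in fact makes precise what the paper merely asserts ($\chi_s'(C_n|_{C_n^2})=5$). For even $n$ you keep the same partition and get $3+5=8$ for most even orders, which is cleaner and stronger than the paper's even case (the paper instead star-colours the Hamiltonian cycle and the two half-length inner cycles on three pairwise disjoint palettes, for a total of $9$).

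Two steps are not actually carried out, and one rests on an invalid inference. First, vertex-transitivity of $C_n^3$ together with $\alpha(C_n^3)=\lfloor n/4\rfloor$ yields only the \emph{lower} bound $\chi(C_n^3)\ge\lceil n/\lfloor n/4\rfloor\rceil$ (via the fractional chromatic number); the inequality your proof actually consumes is the upper bound $\chi(C_n^3)\le 5$ for $n\ge 8$, $n\ne 11$, and this must be exhibited, e.g.\ by writing $n=4a+5b$ (possible for every $n\ge 8$ except $n=11$) and colouring the outer edges in consecutive blocks $1,2,3,4$ and $1,2,3,4,5$ around the cycle. You call this the ``technical heart'' but do not supply it. Second, the small admissible orders where the partition estimate degenerates are deferred to unspecified explicit colourings; the critical one is $n=7$, which the theorem covers (odd, $\ne 5,11$) but where $C_7^3=K_7$ makes your bound $3+7=10$, so an explicit star $8$-edge-colouring (the paper gives one with $7$ colours) is indispensable, not optional. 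With the block colouring written down and a concrete table for $n=7$ (and, if you want your sharper even-case claim, a check of $n=6$, where $C_6^3=K_6$ and your estimate is $9$ rather than $8$), the argument is complete.
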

\begin{proof}
First of all, we use $C_n$ to represent the Hamiltonian cycle in the cyclic square graph. Due to the particularity of its construction, we  consider the following two cases about the star edge-coloring of $C_n^2$.
\begin{case}
$n$ is an even number.
\end{case}

In this case, we first consider the star edge-coloring of Hamiltonian cycle, then consider other edges. Obviously, it suffices to get a star $3$-edge-coloring for the Hamiltonian cycle. Besides, the interior of the Hamiltonian cycle $C_n$ contains two cycles of length $n/2$. Thus, for $n\neq10$, the number of colors required for the star edge-coloring of the inner two cycles is at most $6$. Therefore, $\chi_{\mathrm{star}}'(C_n^2) \leq 3+6=9$. For $n = 10$, see \autoref{n10}, we can get a star $9$-edge-coloring.

\begin{figure}%
\centering
\subcaptionbox{A star $8$-edge-coloring of $C_{10}^{2}$\label{n10}}[0.3\linewidth]
{
\begin{tikzpicture}
\foreach \ang in {0,...,9}
{
\def\pointname{A\ang}
\coordinate (\pointname) at ($(\ang*360/10:2)$);
\node[circle, inner sep =1, fill, draw] () at (A\ang) {};}
\draw (A0)--node {\footnotesize 1}(A1)--node {\footnotesize 2}(A2)--node {\footnotesize 3}(A3)--node {\footnotesize 1}(A4)--node {\footnotesize 2}(A5)--node {\footnotesize 3}(A6)--node {\footnotesize 1}(A7)--node {\footnotesize 2}(A8)--node {\footnotesize 3}(A9)--node {\footnotesize 2}cycle;
\draw (A0)--node {\footnotesize 4}(A2)--node {\footnotesize 5}(A4)--node {\footnotesize 6}(A6)--node {\footnotesize 7}(A8)--node {\footnotesize 5}cycle;
 \draw (A1)--node {\footnotesize 7}(A3)--node {\footnotesize 8}(A5)--node {\footnotesize 4}(A7)--node {\footnotesize 8}(A9)--node {\footnotesize 6}cycle;
\end{tikzpicture}
}
\subcaptionbox{A star $7$-edge-coloring of $C_{7}^{2}$\label{n7}}[0.3\linewidth]
{
\begin{tikzpicture}
\foreach \ang in {0,...,6}
{
\def\pointname{A\ang}
\coordinate (\pointname) at ($(\ang*360/7:2)$);
\node[circle, inner sep =1, fill, draw] () at (A\ang) {};}
\draw (A0)--node {\footnotesize 1}(A1)--node {\footnotesize 2}(A2)--node {\footnotesize 3}(A3)--node {\footnotesize 4}(A4)--node {\footnotesize 5}(A5)--node {\footnotesize 6}(A6)--node {\footnotesize 7}cycle;
\draw (A0)--node {\footnotesize 4}(A2)--node {\footnotesize 6}(A4)--node {\footnotesize 1}(A6)--node {\footnotesize 3}(A1)--node {\footnotesize 5}(A3)--node {\footnotesize 7}(A5)--node {\footnotesize 2}cycle;
\end{tikzpicture}
}
\subcaptionbox{A star $9$-edge-coloring of $C_{11}^{2}$\label{n11}}[0.3\linewidth]
{
\begin{tikzpicture}
\foreach \ang in {0,...,10}
{
\def\pointname{A\ang}
\coordinate (\pointname) at ($(\ang*360/11:2)$);
\node[circle, inner sep =1, fill, draw] () at (A\ang) {};}
\draw (A0)--node {\footnotesize 1}(A1)--node {\footnotesize 2}(A2)--node {\footnotesize 3}(A3)--node {\footnotesize 1}(A4)--node {\footnotesize 2}(A5)--node {\footnotesize 3}(A6)--node {\footnotesize 1}(A7)--node {\footnotesize 2}(A8)--node {\footnotesize 3}(A9)--node{\footnotesize 1}(A10)--node {\footnotesize 4}cycle;
\draw (A0)--node {\footnotesize 5}(A2)--node {\footnotesize 4}(A4)--node {\footnotesize 6}(A6)--node {\footnotesize 4}(A8)--node {\footnotesize 7}(A10)--node {\footnotesize 6}(A1)--node {\footnotesize 7}(A3)--node {\footnotesize 8}(A5)--node {\footnotesize 5}(A7)--node {\footnotesize 8}(A9)--node {\footnotesize 9}cycle;
\end{tikzpicture}
}
\caption{}
\label{}
\end{figure}

\begin{case}
$n$ is an odd number.
\end{case}

In this case, the interior of the Hamiltonian cycle $C_n$ contains one cycle $C_n'$ of length $n$. Thus, the cyclic square graph $C_n^2$ can be divided into two subgraphs $C_n$ and $C_n'$. For $n \neq 7$ or $11$, the restricted-strong chromatic index of $C_n$ on $C_n^2$: $\chi_{s}'(C_n|_{C_n^2})=5$. Obviously, $\chi_{\mathrm{star}}'(C_n')=3$ ($n\neq5$). By \autoref{partition}, we derive that $$\chi_{\mathrm{star}}'(C_n^2) \leq \chi_{s}'(C_n|_{C_n^2})+\chi_{\mathrm{star}}'(C_n')=5+3=8 (n\neq5, 7, 11). $$

For $n=5$, it is easy to check that $\chi_{\mathrm{star}}'(C_5^2)=\chi_{\mathrm{star}}'(K_5)=9$. For $n=7$ and $n=11$, see \autoref{n7} and \ref{n11}, we can get a star $7$-edge-coloring for $C_7^2$ and a star $9$-edge-coloring for $C_{11}^2$.
\resetcounter
\end{proof}

\section{Star edge-coloring of generalized Petersen graphs $P(3n,n)$}

\begin{figure}%
\centering
\subcaptionbox{$i\equiv 0\pmod{2}$, $0\leq i\leq n-1$\label{ieven}}[0.4\linewidth]
{
\begin{tikzpicture}
\foreach \ang in {0, 1, 2}
{
\def\pointnameA{A\ang}
\def\pointnameB{B\ang}
\coordinate (\pointnameA) at ($(\ang*360/3+90:1.2)$);
\node[circle, inner sep =1, fill, draw] () at (A\ang) {};
\coordinate (\pointnameB) at ($(\ang*360/3+90:2.1)$);
\fill (B\ang) circle (2pt);}
\draw (A0)--node{\footnotesize 4}(A1)--node{\footnotesize 3}(A2)--node{\footnotesize 2}cycle;
\draw (A0)--node{\footnotesize 5}(B0);
\draw (A1)--node{\footnotesize 5}(B1);
\draw (A2)--node{\footnotesize 1}(B2);
\fill (A0) node[right] {\footnotesize$v_{i}$};
\fill (A1) node[left] {\footnotesize$v_{i+n}$};
\fill (A2) node[right] {\footnotesize$v_{i+2n}$};
\fill (B0) node[right] {\footnotesize$u_{i}$};
\fill (B1) node[left] {\footnotesize$u_{i+n}$};
\fill (B2) node[right] {\footnotesize$u_{i+2n}$};
\end{tikzpicture}
}
\subcaptionbox{$i\equiv 1\pmod{2}$, $0\leq i\leq n-1$\label{iodd}}[0.4\linewidth]
{
\begin{tikzpicture}
\foreach \ang in {0, 1, 2}
{
\def\pointnameA{A\ang}
\def\pointnameB{B\ang}
\coordinate (\pointnameA) at ($(\ang*360/3+90:1.2)$);
\node[circle, inner sep =1, fill, draw] () at (A\ang) {};
\coordinate (\pointnameB) at ($(\ang*360/3+90:2.1)$);
\fill (B\ang) circle (2pt);}
\draw (A0)--node{\footnotesize 5}(A1)--node{\footnotesize 2}(A2)--node{\footnotesize 1}cycle;
\draw (A0)--node{\footnotesize 4}(B0);
\draw (A1)--node{\footnotesize 4}(B1);
\draw (A2)--node{\footnotesize 3}(B2);
\fill (A0) node[right] {\footnotesize$v_{i}$};
\fill (A1) node[left] {\footnotesize$v_{i+n}$};
\fill (A2) node[right] {\footnotesize$v_{i+2n}$};
\fill (B0) node[right] {\footnotesize$u_{i}$};
\fill (B1) node[left] {\footnotesize$u_{i+n}$};
\fill (B2) node[right] {\footnotesize$u_{i+2n}$};
\end{tikzpicture}
}
\caption{}
\label{}
\end{figure}

\begin{figure}
\centering
\subcaptionbox{}[0.4\linewidth]
{
\begin{tikzpicture}
\foreach \ang in {0, 1, 2}
{
\def\pointnameA{A\ang}
\def\pointnameB{B\ang}
\def\pointnameC{C\ang}
\def\pointnameD{D\ang}
\coordinate (\pointnameA) at ($(\ang*360/3+90:1.2)$);
\node[circle, inner sep =1, fill, draw] () at (A\ang) {};
\coordinate (\pointnameB) at ($(\ang*360/3+90:2.1)$);
\fill (B\ang) circle (2pt);
\coordinate (\pointnameC) at ($(\ang*360/3+90-20:2.6)$);
\fill (C\ang) circle (2pt);
\coordinate (\pointnameD) at ($(\ang*360/3+90+20:2.6)$);
\fill (D\ang) circle (2pt);
\draw (B\ang)--(C\ang);
\draw (B\ang)--(D\ang);
}
\draw (A0)--node{\footnotesize a}(A1)--node{\footnotesize b}(A2)--node{\footnotesize c}cycle;
\draw (A0)--node{\footnotesize b}(B0);
\draw (A1)--node{\footnotesize }(B1);
\draw (A2)--node{\footnotesize }(B2);
\end{tikzpicture}
}
\subcaptionbox{}[0.4\linewidth]
{
\begin{tikzpicture}
\foreach \ang in {0, 1, 2}
{
\def\pointnameA{A\ang}
\def\pointnameB{B\ang}
\def\pointnameC{C\ang}
\def\pointnameD{D\ang}
\coordinate (\pointnameA) at ($(\ang*360/3+90:1.2)$);
\node[circle, inner sep =1, fill, draw] () at (A\ang) {};
\coordinate (\pointnameB) at ($(\ang*360/3+90:2.1)$);
\fill (B\ang) circle (2pt);
\coordinate (\pointnameC) at ($(\ang*360/3+90-20:2.6)$);
\fill (C\ang) circle (2pt);
\coordinate (\pointnameD) at ($(\ang*360/3+90+20:2.6)$);
\fill (D\ang) circle (2pt);
\draw (B\ang)--(C\ang);
\draw (B\ang)--(D\ang);
}
\draw (A0)--node{\footnotesize a}(A1)--node{\footnotesize b}(A2)--node{\footnotesize c}cycle;
\draw (A0)--node{\footnotesize d}(B0);
\draw (A1)--node{\footnotesize d}(B1);
\draw (A2)--node{\footnotesize d}(B2);
\end{tikzpicture}
}
\caption{}
\label{h0}
\end{figure}
\subsection{Star edge-coloring of generalized Petersen graphs $P(3n,n)$}

\begin{lemma} \label{yinli}
Let $H_0$ be a simple graph as in \autoref{h0}, we have $\chi'_{\mathrm{star}}(H_0)\geq5$.
\end{lemma}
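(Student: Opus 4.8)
The plan is to argue by contradiction: assume $H_0$ admits a star $4$-edge-coloring $\phi$, and produce a forbidden bichromatic path of length $4$. The first step is to record how rigidly $\phi$ is determined near the central triangle $A_0A_1A_2$. Its three edges are pairwise adjacent, so they receive three distinct colors; let $d$ be the unique remaining (fourth) color. Each spoke $A_iB_i$ is adjacent to both triangle edges meeting at $A_i$, so $\phi(A_iB_i)$ is confined to a two-element set: it equals either $d$ or the color of the triangle edge \emph{opposite} $A_i$ (the edge $A_jA_k$ not incident to $A_i$). This two-way choice at every spoke is the engine of the argument, and it immediately gives an exhaustive dichotomy matching the two configurations drawn in \autoref{h0}.

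In the first case some spoke, say $A_0B_0$, carries its opposite color, call it $\alpha=\phi(A_1A_2)$. Here I would examine the two length-$4$ paths that leave a pendant of $B_0$, cross the spoke, and traverse the triangle in the two directions, namely $C_0B_0A_0A_1A_2$ and $C_0B_0A_0A_2A_1$. In each such path the spoke color $\alpha$ happens to coincide with the color of the final triangle edge, so the color sequence has the alternating shape $(?,\alpha,\beta,\alpha)$, which is bichromatic precisely when the free pendant edge equals $\beta$. Running both directions forces $\phi(B_0C_0)$ to avoid both triangle colors incident to $A_0$, and properness at $B_0$ rules out $\alpha$ as well; hence $\phi(B_0C_0)=d$. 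The identical reasoning gives $\phi(B_0D_0)=d$, contradicting properness at $B_0$.

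In the second case every spoke is colored $d$. Fixing any $B_i$, I would use the two length-$4$ paths $C_iB_iA_iA_jB_j$ running through the triangle to a neighboring spoke; each has color sequence $(?,d,\gamma,d)$ and is bichromatic unless the pendant edge avoids the intermediate triangle color $\gamma$. The two choices of neighbor exclude both triangle colors incident to $A_i$, and properness excludes $d$, so both pendant edges at $B_i$ are squeezed onto the single remaining (opposite) color, once more contradicting properness. Since the two cases exhaust the dichotomy, no star $4$-edge-coloring can exist, giving $\chi'_{\mathrm{star}}(H_0)\ge 5$.

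I expect the main obstacle to be bookkeeping rather than insight: I must check that each chosen vertex-sequence is a genuine simple path with four edges, correctly identify each alternating pattern $(?,x,y,x)$ as bichromatic exactly when the free edge equals $y$, and—most importantly—verify that the two-element constraint on every spoke truly makes the case split complete. A useful simplification to flag is that the longer spoke-to-spoke paths crossing two triangle edges are not actually needed; in both cases the contradiction is generated entirely at one vertex $B_i$, where its two pendant edges are forced to share a color.
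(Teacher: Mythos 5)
Your proposal is correct and follows essentially the same route as the paper: both split into the case where some spoke $A_iB_i$ receives the color of the opposite triangle edge and the case where all three spokes receive the fourth color $d$, and in each case force the two pendant edges at some $B_i$ onto a single color, contradicting properness. Your write-up actually makes explicit the bichromatic $4$-paths (e.g.\ $C_0B_0A_0A_1A_2$ and $C_iB_iA_iA_jB_j$) that the paper's terse ``only one color available'' assertions rely on, so it is a faithful, more rigorous rendering of the same argument.
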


\begin{proof}
The graph $H_0$ contains a subgraph $H$, and $\chi'_{\mathrm{star}}(H)=4$. We assume that $H_0$ can be star edge colored with four colors. Let $f$ be a star edge-coloring of $H_0$ which uses four colors, say $C_1=\{a, b, c, d\}$. see \autoref{h0}, Obviously, in this case, the other two edges associated with the vertex $v$ have only one color available, that is the color $d$, which is not a proper edge-coloring. By the symmetry, $H_0$ can not be star edge colored with four colors. Next, we assume that the edges that are associated with the vertices of the triangle and are not in the triangle are all colored with color $d$, Obviously, in this case, the other two edges associated with the vertex $v$ have only one color available, that is the color $c$, which is also not a proper edge-coloring.

In summary, $\chi'_{\mathrm{star}}(H_0)\geq5$
\end{proof}
\begin{theorem}
Let $P(3n,n)$ be a generalized Petersen graph. If $n\geq2$, then $\chi'_{\mathrm{star}}(P(3n,n))=5$.
\end{theorem}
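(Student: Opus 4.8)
The plan is to establish the two bounds $\chi'_{\mathrm{star}}(P(3n,n)) \ge 5$ and $\chi'_{\mathrm{star}}(P(3n,n)) \le 5$ separately, the first by a subgraph argument and the second by an explicit periodic construction.

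For the lower bound the key observation is that $P(3n,n)$ contains the graph $H_0$ of \autoref{h0} as a subgraph. Since the addition of indices is modulo $3n$, the inner edges $v_iv_{i+n}$ partition the inner vertices into $n$ triangles $v_iv_{i+n}v_{i+2n}$ with $0\le i\le n-1$. Any one of these triangles, together with the three spokes $v_iu_i$, $v_{i+n}u_{i+n}$, $v_{i+2n}u_{i+2n}$ and the two outer-cycle edges incident to each of $u_i,u_{i+n},u_{i+2n}$, is precisely a copy of $H_0$. Because a star edge-coloring of a graph restricts to a star edge-coloring of any subgraph (properness is inherited, and any bichromatic $4$-path or $4$-cycle in the subgraph is one in the host), star edge-colorability is monotone under subgraphs, so \autoref{yinli} yields $\chi'_{\mathrm{star}}(P(3n,n))\ge \chi'_{\mathrm{star}}(H_0)\ge 5$.

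For the upper bound I would exhibit a concrete star $5$-edge-coloring. The backbone is periodic in the triangle index $i$ with period two: for even $i$ the triangle $v_iv_{i+n}v_{i+2n}$ and its three spokes are colored as in \autoref{ieven}, and for odd $i$ as in \autoref{iodd}. It then remains to color the $3n$ outer-cycle edges $u_ju_{j+1}$, for which I would fix a periodic pattern compatible with the two gadget types so that the coloring is proper at every $u_j$ (each $u_j$ meets its spoke color and its two incident outer colors) and no bichromatic path or cycle of length four is created across a gadget boundary. Verification then splits into two tasks: properness, checked vertex by vertex directly from the local pictures; and the absence of bichromatic $P_4$'s and $C_4$'s, which by periodicity reduces to the boundedly many such paths lying inside one gadget or straddling a single seam between consecutive gadgets, up to the period-two shift.

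The step I expect to be the main obstacle is the wrap-around. As one traverses the outer cycle, the triangle indices run through $0,1,\dots,n-1$ three times, so the alternating even/odd assignment closes up consistently only when $n$ is even; for odd $n$ the indices $n-1$ and $0$ have the same parity, producing a clash at the seam. I would therefore treat $n$ even and $n$ odd separately, repairing the parity defect for odd $n$ by inserting a single transition gadget (equivalently, by locally recoloring a constant number of edges near the seam) and re-checking only the finitely many $4$-paths meeting that gadget, and I would dispose of the smallest cases $n=2,3$ by hand in case the generic pattern assumes $n$ large enough to have both parities of $i$ present.
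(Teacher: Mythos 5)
Your proposal follows essentially the same route as the paper: the lower bound via the $H_0$ subgraph together with \autoref{yinli}, and the upper bound via the period-two triangle/spoke gadgets of \autoref{ieven} and \autoref{iodd}, followed by a periodic coloring of the outer cycle with local repairs at the seam and separate treatment of small cases. The one point you underestimate is the bookkeeping for the outer cycle: since its length is $3n$ and the natural patterns there have period three, the paper must split into six cases according to the parity of $n$ \emph{and} the residue of $n$ modulo $3$, so your anticipated parity split alone would not suffice to close the construction.
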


\begin{proof}

For $n=2$, it has that $\chi'_{\mathrm{star}}(P(6,2))=5$.

For $n\geq3$,  the graph $P(3n,n)$ contains a subgraph that is isomorphic to $H_0$, thus, $\chi'_{\mathrm{star}}(P(3n,n))\geq\chi'_{\mathrm{star}}(H_0)\geq 5$ by \autoref{yinli}. Therefore, we just want to prove $\chi'_{\mathrm{star}}(P(3n,n))\leq 5$ below. In other words, we are going to get a star $5$-edge-coloring $f$ of $P(3n,n)$, say $C=\{1, 2, 3, 4, 5\}$.

We first divide the edge set of $P(3n,n)$ into three classes. Let $E_1=\{u_iu_{i+1} : i=0, 1, 2, \dots, 3n-1\}$; $E_2=\{v_iv_{i+n} : i=0, 1, 2, \dots, 3n-1\}$; $E_3=\{u_iv_i : i=0, 1, 2, \dots, 3n-1\}$. Then, we define the star edge-coloring $f$ as follows:

First of all, we color the edges in $E_2$. If $i\equiv 0\pmod{2}$ and $0\leq i\leq n-1$, then color the three edges of the circle in which $v_i$ is located with $4, 3, 2$ in the order of subscript from small to large. If $i\equiv 1\pmod{2}$ and $0\leq i\leq n-1$, then color the three edges of the circle in which $v_i$ is located with $5, 2, 1$ in the order of subscript from small to large.

Next, we color the edges in $E_3$. 

When $n$ is an odd number. If $0\leq i\leq n-1$, then we color the edges with $5, 4$ in the order of subscript from small to large. If $n\leq i\leq 2n-1$, then we color the edges with $5, 4$ in the order of subscript from small to large. If $2n\leq i\leq 3n-1$, then we color the edges with $1, 3$ in the order of subscript from small to large.

When $n$ is an even number. If $0\leq i\leq 2n-1$, then we color the edges with $5, 4$ in the order of subscript from small to large. If $2n\leq i\leq 3n-1$, then we color the edges with $1, 3$ in the order of subscript from small to large.

Now, all  edges in $E_2$ and $E_3$ are colored.

The two types of local structures for the coloring in $E_2$ and $E_3$ are shown in \autoref{ieven} and \ref{iodd}. Besides, there exists no bichromatic paths or cycles of length $4$. But, there exists some paths of length $3$, i.e. $5, 4, 5-$path and $4, 5, 4-$path.

Finally, we are to color the edges in $E_1$ according to the following six cases.

\begin{case}
$n\equiv 0 \pmod{3}$ and $n$ is an odd number.
\label{a1}
\end{case}

If $0\leq i\leq 2n-4$, then we color the edges with $3, 2, 1$ in the order of subscript from small to large, and let $f(u_{2n-3} u_{2n-2})=2$, $f(u_{2n-2} u_{2n-1})=3$, $f(u_{2n-1} u_{2n})=2$. If $2n\leq i\leq 3n-1$, then we color the edges with $5, 4, 2$ in the order of subscript from small to large.

Thus, all  edges of $P(3n,n)$($n\geq2$) are colored. To test whether such a coloring exists bichromatic paths or cycles of length $4$, we discuss the following four cases.

First, we check whether exists a bichromatic path of length $4$ in $E_2\cup E_3$. If such a path exists, it must contain two edges of the $3$-cycle. By observing the local star edge-coloring of $E_2\cup E_3$ in \autoref{ieven} and \ref{iodd}, we can see that such a bichromatic path of length $4$ does not exist.

Secondly, we check whether exists a bichromatic path of length $4$ in  $E_1\cup E_2$. $E_1\cup E_2$ is a graph that contains a $3n$-cycle and $n$ $3$-cycles. When we colored the edges in $E_1$, it was guaranteed that the coloring is a star edge-coloring. In addition, there exist no paths of length $4$ in the $3$-cycle. Thus, it is impossible to exist such a bichromatic path of length $4$.

Thirdly, we check whether exists a bichromatic path of length $4$ in $E_1\cup E_3$. When we colored the edges in $E_1$, it was guaranteed that the coloring is a star edge-coloring. If such a path exists, it must contain one edge of $E_3$. For $0\leq i\leq2n-4$ and $2n\leq i\leq 3n-1$, the colors of the edges whose distance less than or equal to $2$ in $E_1$ are different. So, there exists no bichromatic paths or cycles of length $4$. Then, we check the local coloring of each interface. Although $f(u_{2n-3} u_{2n-2})=2$, $f(u_{2n-2} u_{2n-1})=3$ and $f(u_{2n-1} u_{2n})=2$, the edges $u_{2n}u_{2n}$ in $E_3$ associated with vertex $u_{2n}$ are colored by 1, the edges $u_{2n-3}v_{2n-3}$ in $E_3$ associated with vertex $u_{2n-3}$ are colored by 5. Besides, $f(u_{3n-1} u_{0})=1$, $f(u_{0} u_{1})=3$ and $f(u_{1} u_{2})=2$, the edge $u_{3n-1}v_{3n-1}$ in $E_3$ associated with vertex $u_{3n-1}$ is colored by 1, the edge $v_2v_2$ in $E_3$ associated with vertex $u_2$ is colored by 5, so there exist no bichromatic paths or cycles of length $4$.

Finally, we check whether exists a bichromatic path or cycle of length $4$ in $E_1\cup E_2\cup E_3$. Through our verification one by one, there exist no bichromatic paths or cycles of length $4$.

\begin{case}
$n\equiv 0 \pmod{3}$ and $n$ is an even number.
\end{case}

If $ 0\leq i\leq 2n-1$, then we color the edges with $1, 3, 2, 3$ in the order of subscript from small to large. If $2n\leq i\leq 3n-1$, then we color the edges with $5, 4, 2$ in the order of subscript from small to large.

About the verification of bichromatic paths or cycles of length $4$ is similar to case \ref{a1}, we can get that bichromatic paths or cycles of length $4$ do not exist.

\begin{case}
$n\equiv 1 \pmod{3}$ and $n$ is an odd number.
\end{case}

If $0\leq i\leq n-2$, then we color the edges with $2, 1, 3$ in the order of subscript from small to large, and let $f(u_{n-1}u_n)=1$. If $n\leq i\leq 2n-3$, then we color the edges with $2, 1, 3$ in the order of subscript from small to large, and let $f(u_{2n-2}u_{2n-1})=2$, $f(u_{2n-1}u_{2n})=3$. If $2n\leq i\leq 3n-3$, then we color the edges with $4, 5, 2$ in the order of subscript from small to large, and let $f(u_{3n-2}u_{3n-1})=4$ and $f(u_{3n-1}u_0)=3$.

About the verification of bichromatic paths or cycles of length $4$ is similar to case \ref{a1}, we can get that bichromatic paths or cycles of length $4$ do not exist. 

\begin{case}
$n\equiv 1 \pmod{3}$ and $n$ is an even number.
\end{case}

If $0\leq i\leq2n-1$, then we color the edges with $2, 3, 1$ in the order of subscript from small to large. If $2n\leq i\leq 3n-2$, then we color the edges with $5, 2, 4$ in the order of subscript from small to large, and let $f(u_{3n-1}u_0)=1$.

About the verification of bichromatic paths or cycles of length $4$ is similar to case \ref{a1}, we can get that bichromatic paths or cycles of length $4$ do not exist.
\begin{case}
$n\equiv 2 \pmod{3}$ and $n$ is an odd number.
\end{case}

If $0\leq i\leq2n-1$, then we color the edges with $3, 1, 2$ in the order of subscript from small to large. If $2n\leq i\leq 3n-1$, then we color the edges with $4, 2, 5$ in the order of subscript from small to large.

About the verification of bichromatic paths or cycles of length $4$ is similar to case \ref{a1}, we can get that bichromatic paths or cycles of length $4$ do not exist.
\begin{case}
$n\equiv 2 \pmod{3}$ and $n$ is an even number.
\end{case}

If $0\leq i\leq2n-2$, then we color the edges with $1, 3, 2$ in the order of subscript from small to large, and let $f(u_{2n-1}u_{2n})=3$. If $2n\leq i\leq 3n-4$, then we color the edges with $5, 2, 4$ in the order of subscript from small to large, and let $f(u_{3n-3}u_{3n-2})=5$,  $f(u_{3n-2}u_{3n-1})=4$ and $f(u_{3n-1}u_0)=2$.

About the verification of bichromatic paths or cycles of length $4$ is similar to case \ref{a1}, we can get that bichromatic paths or cycles of length $4$ do not exist.

In summary, $\chi'_{\mathrm{star}}(P(3n,n))=5$.
\end{proof}

\end{document}